\theoremstyle{plain}
\newtheorem{thm}{\protect\theoremname}[section]
  \theoremstyle{plain}
  \newtheorem{prop}[thm]{\protect\propositionname}
  \theoremstyle{definition}
  \newtheorem{defn}[thm]{\protect\definitionname}
  \theoremstyle{plain}
  \newtheorem{cor}[thm]{\protect\corollaryname}
  \theoremstyle{remark}
  \newtheorem*{rem*}{\protect\remarkname}
  \theoremstyle{remark}
  \newtheorem*{acknowledgement*}{\protect\acknowledgementname}
  \providecommand{\acknowledgementname}{Acknowledgement}
  \providecommand{\corollaryname}{Corollary}
  \providecommand{\definitionname}{Definition}
  \providecommand{\propositionname}{Proposition}
  \providecommand{\remarkname}{Remark}
\providecommand{\theoremname}{Theorem}
\begin{document}

\title[Isometric tuples are hyperreflexive]{Isometric tuples are hyperreflexive}

\author{Adam H. Fuller}

\address{Department of Pure Mathematics, University of Waterloo, 200 University
Avenue West, Waterloo, Ontario N2L 3G1, Canada}

\email{a2fuller@uwaterloo.ca}

\author{Matthew Kennedy}

\address{School of Mathematics and Statistics, Carleton University, 1125 Colonel
By Drive, Ottawa, Ontario K1S 5B6, Canada}

\email{mkennedy@math.carleton.ca}
\begin{abstract}
An $n$-tuple of operators $(V_{1},\ldots,V_{n})$ acting on a Hilbert
space $\mathcal{H}$ is said to be isometric if
the row operator $(V_{1},\ldots,V_{n}):\mathcal{H}^{n}\to\mathcal{H}$
is an isometry. We prove that every isometric $n$-tuple is hyperreflexive,
in the sense of Arveson. For $n=1$, the hyperreflexivity constant
is at most $95$. For $n\geq2$, the hyperreflexivity constant is
at most $6$.
\end{abstract}

\keywords{invariant subspace, reflexivity, hyperreflexivity, distance formula,
isometric tuple, free semigroup algebra}

\subjclass[2000]{47A15, 47L05, 47A45, 47B20, 47B48.}

\thanks{Second author partially supported by an NSERC grant}

\maketitle

\section{Introduction and preliminaries}

An $n$-tuple of operators $(V_{1},\ldots,V_{n})$ acting on a Hilbert
space $\mathcal{H}$ is said to be isometric if the row operator $(V_{1},\ldots,V_{n}):\mathcal{H}^{n}\to\mathcal{H}$
is an isometry. This is equivalent to requiring that the operators
in the tuple are isometries with pairwise orthogonal ranges, and hence
that they satisfy the algebraic relations
\[
V_{i}^{*}V_{j}=\delta_{ij}I,\quad1\leq i,j\leq n.
\]
An isometric tuple is a natural higher-dimensional generalization
of an isometry, and these objects appear throughout mathematics and
mathematical physics (see for example \cite{Dav01,Ken11,Ken12} and
the references therein).

The notion of reflexivity, introduced by Halmos in \cite{H70} and
\cite{H71}, plays an important role in operator theory. A subspace
$\mathcal{S}$ of $\mathrm{B}\left(\mathcal{H}\right)$ is \emph{reflexive}
if 
\begin{equation}
\mathcal{S}=\mathrm{Alg(Lat}(\mathcal{S}))=\{T\in\mathrm{B}(\mathcal{H})\mid P^{\perp}TP=0\ \mbox{for every}\ P\in\mathrm{Lat}(\mathcal{S})\},\label{eq:ref-def}
\end{equation}
where $\mathrm{Lat}(\mathcal{S})$ denotes the lattice of closed invariant
subspaces for $\mathcal{S}$, and where we have identified each subspace
in $\mathrm{Lat}(\mathcal{S})$ with the corresponding projection
in $\mathrm{B}(\mathcal{H})$. The basic idea is that a reflexive
space of operators is completely determined by its invariant subspaces.

The notion of hyperreflexivity was introduced by Arveson in \cite{Arv75}
as a quantitative strengthened form of reflexivity. Before giving
the formal definition of hyperreflexivity, it will be convenient to
give another, slightly different, characterization of reflexivity.
For a subspace $\mathcal{S}$ of $\mathrm{B}(\mathcal{H})$, let 
\[
\beta(T,\mathcal{S})=\sup\{\|P^{\perp}TP\|\mid P\in\mathrm{Lat}(\mathcal{S})\},\quad T\in\mathrm{B}(\mathcal{H}).
\]
The quantity $\beta(T,\mathcal{S})$ measures the ``fit'' between
an operator $T$ in $\mathrm{B}(\mathcal{H})$ and the invariant subspace
lattice of $\mathcal{S}$. By (\ref{eq:ref-def}), we see that $\mathcal{S}$
is reflexive precisely when
\begin{equation}
\mathcal{S}=\{T\in\mathrm{B}(\mathcal{H})\mid\beta(T,\mathcal{S})=0\}.\label{eq:ref-eq}
\end{equation}
Note that this is equivalent to the existence of, for every $T$ in
$\mathrm{B}(\mathcal{H})$, a constant $C_{T}>0$ (depending on $T$)
such that $\mathrm{dist}(T,\mathcal{S})\leq C_{T}\,\beta(T,\mathcal{S})$,
where
\[
\mathrm{dist}(T,\mathcal{S})=\inf\{\|T-S\|\mid S\in\mathcal{S}\},\quad T\in\mathrm{B}(\mathcal{H}).
\]
The subspace $\mathcal{S}$ is hyperreflexive if a constant can be
chosen independent of $T$. Specifically, a subspace $\mathcal{S}$
of $\mathrm{B}(\mathcal{H})$ is \emph{hyperreflexive} if there is
a constant $C>0$ such that
\begin{equation}
\mathrm{dist}(T,\mathcal{S})\leq C\,\beta(T,\mathcal{S}),\quad T\in\mathrm{B}(\mathcal{H}).\label{eq:hyperref-ineq}
\end{equation}
In this case we say that $\mathcal{S}$ is hyperreflexive with distance
constant at most $C$. In addition, we follow the standard convention
and say that a family of operators is hyperreflexive if the weakly
closed algebra generated by the family is hyperreflexive.

Note that the inequality 
\begin{equation}
\beta(T,S)\leq\mathrm{dist}(T,\mathcal{S}),\quad T\in\mathrm{B}(\mathcal{H}),\label{eq:reverse-hyperref-ineq}
\end{equation}
always holds. This is a consequence of the fact that for arbitrary
$S$ in $\mathcal{S}$,
\[
\|P^{\perp}TP\|=\|P^{\perp}(T-S)P\|\leq\mathrm{dist}(T,\mathcal{S}).
\]
If $\mathcal{S}$ is hyperreflexive, then by (\ref{eq:hyperref-ineq})
and (\ref{eq:reverse-hyperref-ineq}), the function $T\to\beta(T,\mathcal{S})$
is equivalent to the distance function $T\to\mathrm{dist}(T,\mathcal{S})$
on $\mathrm{B}(\mathcal{H})$. In particular, it follows from (\ref{eq:ref-eq})
that $\mathcal{S}$ is reflexive. It was shown in \cite{KL86}, however,
that the converse is false, and hence that hyperreflexivity is a strictly
stronger property than reflexivity.

In addition to Arveson's work, the notion of hyperreflexivity has
been studied by many other authors. The work of Christensen in \cite{Chr82}
(see also \cite{Chr77}) established that the hyperreflexivity of
a von Neumann algebra is equivalent to a positive solution to the
Derivation Problem. In \cite{Dav87}, Davidson proved that the unilateral
shift is hyperreflexive, and in \cite{KP05}, Kli\'{s} and Ptak generalized
this result to a more general class of operators called quasinormal
operators.

In \cite{DP99}, Davidson and Pitts generalized this result in a different
direction, and proved the hyperreflexivity of a class of isometric
tuples that can be seen as a higher dimensional generalization of
the unilateral shift. Subsequently, in \cite{Ber98}, Bercovici established
the hyperreflexivity of much larger class of algebras, and substantially
improved the distance constant from \cite{DP99}. See also the papers
\cite{LS75}, \cite{Ros82}, \cite{KL85}, \cite{KL86}, \cite{MP05},
\cite{DL06}, \cite{KP06}, \cite{JP06} and \cite{PP12}.

Despite these results, hyperreflexivity is still not well understood.
In fact, it is still not known whether many interesting and naturally
occuring examples are hyperreflexive, or even reflexive. In \cite{DKP01},
which was a followup to \cite{DP99}, Davidson, Katsoulis and Pitts
posed the problem of whether every isometric tuple is hyperreflexive.
Recently, in \cite{Ken11} (see also \cite{Ken12}) a partial solution
to this problem was given, and it was established that the class of
absolutely continuous isometric tuples is hyperreflexive (see Section
\ref{sec:isom-tuples} below for the definition of absolute continuity).

The main result in this paper is the following theorem, which completely
resolves the problem from \cite{DKP01}, and more generally, provides
the first large class of hyperreflexive examples in multivariate operator
theory. 
\begin{thm}
Every isometric $n$-tuple is hyperreflexive. For $n=1$, the distance
constant is at most $95$. For $n\geq2$, the distance constant is
at most $6$.
\end{thm}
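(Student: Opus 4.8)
The plan is to reduce to the two structurally extreme cases by means of the Wold--Lebesgue decomposition of an isometric tuple, prove hyperreflexivity in each, and then recombine the estimates. Write $V=(V_{1},\ldots,V_{n})$ for the tuple and $\mathcal{S}=\mathcal{S}(V)$ for the weakly closed algebra it generates. By the structure theory of isometric tuples \cite{Ken11,Ken12} there is a reducing decomposition $\mathcal{H}=\mathcal{H}_{a}\oplus\mathcal{H}_{s}$ for which the restriction $V_{a}=V|_{\mathcal{H}_{a}}$ is absolutely continuous and $V_{s}=V|_{\mathcal{H}_{s}}$ is singular. First I would record the two facts I need from that theory: (i) a singular isometric tuple generates a self-adjoint algebra, so $\mathcal{S}(V_{s})=W^{*}(V_{s})$ is a von Neumann algebra; and (ii) the projection $P$ onto $\mathcal{H}_{a}$ lies in $\mathcal{S}$ itself (not merely in $\mathcal{S}'$). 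Since $P$ is reducing, compressing weak limits of polynomials shows $\mathcal{S}\subseteq\mathcal{S}(V_{a})\oplus\mathcal{S}(V_{s})$ automatically; the extra membership $P\in\mathcal{S}$ then forces the reverse inclusion, giving the genuine block-diagonal splitting $\mathcal{S}=\mathcal{S}(V_{a})\oplus W^{*}(V_{s})$. This splitting is what lets me treat the two summands independently.

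Next I would prove a general combination estimate for block-diagonal algebras. Suppose $\mathcal{H}=\mathcal{H}_{1}\oplus\mathcal{H}_{2}$ reduces $\mathcal{S}=\mathcal{S}_{1}\oplus\mathcal{S}_{2}$, and write an operator $T$ in block form $T=(T_{ij})$. Since each $\mathcal{H}_{i}$ is reducing, the projections $I\oplus0$ and $0\oplus I$ lie in $\mathrm{Lat}(\mathcal{S})$, and every subspace in $\mathrm{Lat}(\mathcal{S}_{i})$ lies in $\mathrm{Lat}(\mathcal{S})$; testing $\beta$ against these yields
\[
\beta(T,\mathcal{S})\geq\max\bigl\{\beta(T_{11},\mathcal{S}_{1}),\ \beta(T_{22},\mathcal{S}_{2}),\ \|T_{12}\|,\ \|T_{21}\|\bigr\}.
\]
In the other direction, approximating the diagonal blocks and estimating the off-diagonal remainder by a triangle inequality gives $\mathrm{dist}(T,\mathcal{S})\leq\max\{\mathrm{dist}(T_{11},\mathcal{S}_{1}),\mathrm{dist}(T_{22},\mathcal{S}_{2})\}+\max\{\|T_{12}\|,\|T_{21}\|\}$. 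Hence if $\mathcal{S}_{1}$ and $\mathcal{S}_{2}$ are hyperreflexive with constants $C_{1},C_{2}$, then $\mathcal{S}$ is hyperreflexive with constant at most $\max\{C_{1},C_{2}\}+1$.

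It then remains to bound the constants of the two summands. For the absolutely continuous summand $\mathcal{S}(V_{a})$ I would invoke the hyperreflexivity of absolutely continuous isometric tuples from \cite{Ken11}, which supplies a constant depending only on whether $n=1$ or $n\geq2$. For the singular summand $W^{*}(V_{s})$ I would argue by its von Neumann type: when $n\geq2$ the component isometries are non-unitary with orthogonal ranges, so the identity of $W^{*}(V_{s})$ is properly infinite, and such an algebra has no type $\mathrm{II}_{1}$ summand and is therefore hyperreflexive by Christensen's work \cite{Chr82}; when $n=1$ the singular part is a singular unitary, whose weakly closed algebra is an abelian (type $\mathrm{I}$) von Neumann algebra, again hyperreflexive. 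To land on the sharp constants $6$ and $95$ I expect a direct, explicit estimate for these specific von Neumann summands will be needed rather than the generic derivation-theoretic bound; feeding the resulting constants into the combination estimate then yields $6$ for $n\geq2$ and $95$ for $n=1$, the discrepancy reflecting the much larger constant available for absolutely continuous tuples in the one-variable case.

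The main obstacle I anticipate is the structural input (i)--(ii), that is, controlling the singular part. Establishing that a singular isometric tuple generates a von Neumann algebra, and that the absolutely-continuous/singular projection is \emph{internal} to $\mathcal{S}$ so that the algebra genuinely splits as a direct sum, is the delicate point on which the clean combination estimate rests; once that splitting is in hand, the hyperreflexivity of each piece and the bookkeeping of constants are comparatively routine.
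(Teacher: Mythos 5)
Your proof collapses at its structural foundation: for $n\geq2$ there is \emph{no} reducing decomposition $\mathcal{H}=\mathcal{H}_{a}\oplus\mathcal{H}_{s}$ of a general isometric tuple into an absolutely continuous part and a singular part. The Lebesgue--von Neumann--Wold decomposition of \cite{Ken12} (Theorem \ref{thm:decomp-isom-tuple} here) has \emph{four} summands, $V=V_{u}\oplus V_{a}\oplus V_{s}\oplus V_{d}$, and the dilation-type summand $V_{d}$ is by definition a tuple with no absolutely continuous and no singular reducing summand at all. Such tuples genuinely exist (they arise as minimal isometric dilations of contractive tuples) and they are exactly why the problem of \cite{DKP01} remained open; a proof that only treats tuples splitting as (a.c.)\,$\oplus$\,(singular) silently excludes the hard case. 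Relatedly, your claim (ii) is a misreading of the structure theorem of \cite{DKP01}: the structure projection $P$ does lie in $\mathcal{S}$, but its range is only \emph{coinvariant}, not reducing. For a dilation-type tuple the corner $P^{\perp}\mathcal{S}P$ is nonzero, so $\mathcal{S}$ is block lower-triangular rather than block-diagonal, and your combination lemma (which is fine, but needs both $I\oplus0$ and $0\oplus I$ in $\mathrm{Lat}(\mathcal{S})$) does not apply. The paper's proof is built precisely around this asymmetry: it uses the \emph{column} identity $\mathrm{dist}(T,\mathcal{S})^{2}=\mathrm{dist}(TP,\mathcal{S}P)^{2}+\mathrm{dist}(TP^{\perp},\mathcal{S}P^{\perp})^{2}$, bounds $\mathrm{dist}(TP,\mathcal{S}P)$ using $\mathcal{S}P=\mathcal{M}P$ with $\mathcal{M}=\mathrm{W}^{*}(V)$, and bounds $\mathrm{dist}(TP^{\perp},\mathcal{S}P^{\perp})$ using the fact (Proposition 6.2 of \cite{Ken12}) that the compression of $\mathcal{S}$ to $\mathrm{ran}(P^{\perp})$ is absolutely continuous, absorbing the off-diagonal corner via $\|PTP^{\perp}\|\leq\beta(T,\mathcal{S})$.

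There is a second gap in your treatment of the von Neumann summand. Hyperreflexivity of a von Neumann algebra $\mathcal{M}$ via Christensen's work is governed by the derivation problem for the \emph{commutant} $\mathcal{M}'$ (a derivation $X\mapsto TX-XT$ on $\mathcal{M}'$ is inner exactly when $T$ is close to $\mathcal{M}=(\mathcal{M}')'$), so what must be excluded is a bad (non-injective type $\mathrm{II}_{1}$) summand of $\mathcal{M}'$, not of $\mathcal{M}$. Showing that the identity of $W^{*}(V_{s})$ is properly infinite says nothing here: a type $\mathrm{II}_{\infty}$ algebra can have a type $\mathrm{II}_{1}$ commutant, which is precisely the open case. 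The paper closes this hole with Proposition \ref{prop:commutant-vn-alg-injective}: a singular tuple is a \emph{unitary} (Cuntz) tuple, and for such a tuple the commutant of $\mathrm{W}^{*}(V)$ is injective, by an averaging/fixed-point argument using $\sum U_{i}U_{i}^{*}=I$; then \cite{Chr77} gives the constant $4$. Finally, your constants are asserted rather than derived: in the paper, $n\geq2$ yields $4\sqrt{2}<6$ from two bounds of $4\beta(T,\mathcal{S})$ combined Pythagorean-style, while $95$ for $n=1$ comes from the chain $13\to15\to2+3\cdot15=47\to2\cdot47+1=95$ using \cite{Dav87,KP05,Ros82}, Theorem \ref{thm:directsum-hyperreflexive}, and Theorem \ref{thm:hereditary-hyperreflexive} (the last step deliberately avoiding any claim that the singular projection lies in $\mathrm{W}(V)$, a claim your $n=1$ argument would also need to justify).
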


In addition to this introduction, this paper has two other sections.
In Section \ref{sec:isometries}, we consider the classical case of
a single isometry. In Section \ref{sec:isom-tuples}, we consider
isometric tuples and establish our main result.

\section{\label{sec:isometries}The hyperreflexivity of isometries}

In this section we consider the case of a single isometry. Since an
isometry is a special case of a quasinormal operator, the main result
from \cite{KP05} implies that every isometry is hyperreflexive. However,
we provide a simple proof of this result here, based on the main result
in \cite{Dav87}, and obtain a better distance constant.

We will require the following classical result, which follows from
the Wold decomposition and the Lebesgue decomposition of a measure.
The details can be found in, for example, \cite{NFBK10}.
\begin{thm}
[Lebesgue-von Neumann-Wold Decomposition]\label{thm:classical-lebesgue-vn-wold}Let
$V$ be an isometry. Then $V$ can be decomposed as 
\[
V=V_{u}\oplus V_{a}\oplus V_{s},
\]
where $V_{u}$ is a unilateral shift, $V_{a}$ is a unitary with an
absolutely continuous spectral measure, and $V_{s}$ is a unitary
with a singular spectral measure. Note that absolute continuity and
singularity are with respect to Lebesgue measure.
\end{thm}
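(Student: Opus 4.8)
The plan is to obtain the decomposition in two stages: first separate the pure part of $V$ from its unitary part via the Wold decomposition, and then split the unitary part according to the Lebesgue decomposition of its spectral measure. Concretely, I would begin by forming the wandering subspace $\mathcal{L} = \mathcal{H} \ominus V\mathcal{H}$ together with the subspace $\mathcal{H}_\infty = \bigcap_{k \geq 0} V^k \mathcal{H}$ of vectors lying in the range of every power of $V$. The Wold decomposition asserts that $\mathcal{H} = \left(\bigoplus_{k \geq 0} V^k \mathcal{L}\right) \oplus \mathcal{H}_\infty$, with both summands reducing $V$; on the first summand $V$ acts as a unilateral shift $V_u$ of multiplicity $\dim \mathcal{L}$, while on $\mathcal{H}_\infty$ it acts as a surjective isometry, i.e.\ a unitary $W$. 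I would verify that the summands are orthogonal and reducing by checking that the $V^k \mathcal{L}$ are pairwise orthogonal (since $\mathcal{L} \perp V\mathcal{H}$, hence $\mathcal{L} \perp V^k \mathcal{L}$ for $k \geq 1$, and so on) and that $\mathcal{H}_\infty$ is exactly the orthogonal complement of their closed span.

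With $V = V_u \oplus W$ in hand, the remaining task is to decompose the unitary $W$ acting on $\mathcal{H}_\infty$. By the spectral theorem there is a spectral measure $E$ on the unit circle $\mathbb{T}$ with $W = \int_{\mathbb{T}} z \, dE(z)$. For each vector $x$ I would consider the scalar measure $\mu_x(\cdot) = \langle E(\cdot) x, x\rangle$ and invoke the Lebesgue decomposition of $\mu_x$ with respect to Lebesgue (arc-length) measure on $\mathbb{T}$ into absolutely continuous and singular parts. Defining $\mathcal{H}_a$ to be the set of $x$ for which $\mu_x$ is absolutely continuous and $\mathcal{H}_s$ the set for which $\mu_x$ is singular, and setting $V_a = W|_{\mathcal{H}_a}$ and $V_s = W|_{\mathcal{H}_s}$, then yields the claimed unitaries with absolutely continuous and singular spectral measures, so that $V = V_u \oplus V_a \oplus V_s$.

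The point requiring care, which I expect to be the technical heart, is verifying that $\mathcal{H}_a$ and $\mathcal{H}_s$ are genuinely \emph{closed} subspaces that reduce $W$ and satisfy $\mathcal{H}_\infty = \mathcal{H}_a \oplus \mathcal{H}_s$. The standard route is to observe that the classes of absolutely continuous and of singular measures are each stable under the relevant operations: absolute continuity (resp.\ singularity) of $\mu_x$ is preserved under the action of $W$ and of the spectral projections $E(\cdot)$, under the passage to sums of vectors, and under norm limits, which makes both sets closed reducing subspaces. Mutual singularity of the absolutely continuous and singular parts then forces $\mathcal{H}_a \perp \mathcal{H}_s$, and applying the Lebesgue decomposition along a maximal family of orthogonal cyclic subspaces for $W$ shows that the two together span $\mathcal{H}_\infty$. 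Since this combination of the Wold and spectral/Lebesgue decompositions is entirely classical, I would state the conclusion with reference to \cite{NFBK10} rather than reproduce the measure-theoretic bookkeeping in full.
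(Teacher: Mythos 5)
Your proposal is correct and follows exactly the route the paper indicates: the paper states this result without proof, remarking only that it ``follows from the Wold decomposition and the Lebesgue decomposition of a measure'' with details in \cite{NFBK10}, which is precisely your two-stage argument (Wold to split off the unilateral shift, then Lebesgue decomposition of the spectral measure of the unitary part into absolutely continuous and singular summands). Your additional verification that $\mathcal{H}_a$ and $\mathcal{H}_s$ are closed, mutually orthogonal, reducing, and jointly spanning is the standard bookkeeping the cited reference carries out, so there is nothing to reconcile between the two.
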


We will also require two results from the literature. The first result,
proved by Kraus and Larson in \cite{KL86}, and independently by Davidson
in \cite{Dav87}, shows that in certain cases, hyperreflexivity is
inherited by subspaces. Recall that a weak-{*} closed subspace $\mathcal{S}$
of $\mathcal{B}(\mathcal{H})$ has property $\mathbb{A}_{1}(1)$ if
for every $\varepsilon>0$, and every weak-{*} continuous linear functional
$\varphi$ on $\mathcal{S}$, there are vectors $x$ and $y$ in $\mathcal{H}$
such that $\varphi(S)=\langle Sx,y\rangle$ for all $S$ in $\mathcal{S}$,
and $\|x\|\|y\|<(1+\varepsilon)\|\varphi\|$.
\begin{thm}[Kraus-Larson, Davidson]
\label{thm:hereditary-hyperreflexive} Let $\mathcal{S}$ be a hyperreflexive
subspace of $\mathrm{B}(\mathcal{H})$ with distance constant at most
$C$. Suppose furthermore that $\mathcal{S}$ has property $\mathbb{A}_{1}(1)$.
Then any weak-{*} closed subspace of $\mathcal{S}$ is hyperreflexive
with distance constant at most $2C+1$.
\end{thm}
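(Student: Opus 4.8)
The plan is to deduce the hyperreflexivity of a weak-$*$ closed subspace $\mathcal{T}\subseteq\mathcal{S}$ by combining the hyperreflexivity of the ambient space $\mathcal{S}$ with a duality argument, in which property $\mathbb{A}_{1}(1)$ is used to replace an arbitrary annihilating functional by a rank-one one. Fix $T\in\mathrm{B}(\mathcal{H})$ and write $\alpha=\beta(T,\mathcal{T})$; the goal is the estimate $\mathrm{dist}(T,\mathcal{T})\leq(2C+1)\alpha$.

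First I would record the elementary monotonicity $\beta(T,\mathcal{S})\leq\beta(T,\mathcal{T})$. Since $\mathcal{T}\subseteq\mathcal{S}$, any projection with $P^{\perp}S'P=0$ for all $S'\in\mathcal{S}$ certainly kills every $S'\in\mathcal{T}$, so $\mathrm{Lat}(\mathcal{S})\subseteq\mathrm{Lat}(\mathcal{T})$ and the supremum defining $\beta(\cdot,\mathcal{T})$ runs over a larger set. The hyperreflexivity of $\mathcal{S}$ then yields $\mathrm{dist}(T,\mathcal{S})\leq C\,\beta(T,\mathcal{S})\leq C\alpha$, so for each $\varepsilon>0$ I may choose $S\in\mathcal{S}$ with $\|T-S\|\leq C\alpha+\varepsilon$. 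By the triangle inequality it then suffices to bound $\mathrm{dist}(S,\mathcal{T})$ by roughly $(C+1)\alpha$, since $C+(C+1)=2C+1$.

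The core of the argument is the estimation of $\mathrm{dist}(S,\mathcal{T})$. Because $\mathcal{T}$ is weak-$*$ closed, the duality (pre-annihilator) distance formula gives $\mathrm{dist}(S,\mathcal{T})=\sup\{|\varphi(S)|:\varphi\in\mathcal{T}_{\perp},\ \|\varphi\|\leq1\}$, the supremum over weak-$*$ continuous functionals annihilating $\mathcal{T}$. Given such a $\varphi$, I restrict it to $\mathcal{S}$ to obtain a weak-$*$ continuous functional of norm at most one that still annihilates $\mathcal{T}$, and then invoke property $\mathbb{A}_{1}(1)$ of $\mathcal{S}$ to produce vectors $x,y$ with $\varphi(S')=\langle S'x,y\rangle$ for all $S'\in\mathcal{S}$ and $\|x\|\|y\|<1+\varepsilon$. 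Vanishing on $\mathcal{T}$ becomes the geometric condition $y\perp\mathcal{T}x$. Writing $\varphi(S)=\langle Tx,y\rangle-\langle(T-S)x,y\rangle$, the second term is at most $\|T-S\|\,\|x\|\|y\|\leq(C\alpha+\varepsilon)(1+\varepsilon)$, while the first is controlled by $\beta(T,\mathcal{T})$, as explained below, giving $|\langle Tx,y\rangle|\leq\alpha\,\|x\|\|y\|\leq\alpha(1+\varepsilon)$. Letting $\varepsilon\to0$ then produces $\mathrm{dist}(S,\mathcal{T})\leq(C+1)\alpha$, and hence $\mathrm{dist}(T,\mathcal{T})\leq(2C+1)\alpha$.

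The step I expect to be the main obstacle is the passage from the relation $y\perp\mathcal{T}x$ to the bound $|\langle Tx,y\rangle|\leq\beta(T,\mathcal{T})\,\|x\|\|y\|$: one must identify a projection $P\in\mathrm{Lat}(\mathcal{T})$ with $x$ in its range and $y$ in its kernel, so that $\langle Tx,y\rangle=\langle P^{\perp}TPx,y\rangle$ and the right-hand side is dominated by $\beta(T,\mathcal{T})$. This is exactly the point where the interplay between annihilating functionals and invariant subspaces is genuinely used, and where property $\mathbb{A}_{1}(1)$ is indispensable: a generic element of $\mathcal{T}_{\perp}$ carries no such clean geometric meaning, and it is only after the reduction to a rank-one functional $\langle\,\cdot\,x,y\rangle$ that the condition of annihilating $\mathcal{T}$ translates into the invariant-subspace geometry driving the definition of $\beta$. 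The remaining ingredients—the distance-by-duality formula, the restriction of functionals to $\mathcal{S}$, and the bookkeeping that assembles the constant $2C+1$—are routine.
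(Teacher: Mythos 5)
The paper itself gives no proof of this theorem---it is quoted from [KL86] and [Dav87]---so your proposal has to be judged against the standard argument from those sources, whose skeleton (restrict the annihilating functional to $\mathcal{S}$, invoke property $\mathbb{A}_{1}(1)$ to replace it by a rank-one functional $xy^{*}$, then combine with the hyperreflexivity of $\mathcal{S}$; your bookkeeping giving $C+(C+1)=2C+1$ is fine) you have reproduced correctly. The genuine gap is exactly the step you flag as ``the main obstacle'' and then leave unproved: the passage from $y\perp\mathcal{T}x$ to a projection $P\in\mathrm{Lat}(\mathcal{T})$ with $x\in\mathrm{ran}\,P$ and $y\perp\mathrm{ran}\,P$. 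For a general weak-$*$ closed subspace $\mathcal{T}$ no such projection exists, and with the paper's lattice-based definition of $\beta$ the statement you are trying to prove is in fact false. Concretely, take $\mathcal{H}=\mathbb{C}^{2}$ with standard basis $e_{1},e_{2}$ and matrix units $E_{ij}$, let $\mathcal{S}=D_{2}$ be the diagonal algebra (an abelian von Neumann algebra, hence hyperreflexive and with property $\mathbb{A}_{1}(1)$), and let $\mathcal{T}=\mathbb{C}E_{11}$. With $x=y=e_{2}$ we have $y\perp\mathcal{T}x=\mathbb{C}e_{1}$, yet no subspace can contain $x$ and be orthogonal to $y$ since $x=y\neq0$. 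Worse, $\mathrm{Lat}(\mathbb{C}E_{11})=\{0,\mathbb{C}e_{1},\mathbb{C}e_{2},\mathbb{C}^{2}\}$ gives $\beta(E_{22},\mathcal{T})=0$ while $\mathrm{dist}(E_{22},\mathbb{C}E_{11})=1$, so no constant whatsoever works for this $\mathcal{T}$.

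The resolution is that Kraus--Larson and Davidson formulate hyperreflexivity of a \emph{subspace} $\mathcal{T}$ using the rank-one version of $\beta$, namely
\begin{equation*}
\beta(T,\mathcal{T})=\sup\{|\langle Tx,y\rangle|\mid xy^{*}\in\mathcal{T}_{\perp},\ \|x\|\|y\|\leq1\},
\end{equation*}
under which your flagged step is true \emph{by definition} and your proof closes up completely---indeed it then is essentially the standard proof. The two definitions of $\beta$ agree when $\mathcal{T}$ is a unital weakly closed algebra (take $P$ to be the projection onto $[\mathcal{T}x]$, which contains $x$ because $I\in\mathcal{T}$ and is invariant because $\mathcal{T}$ is an algebra), and this covers every application made in the paper, where the subspaces in question are $\mathrm{W}(V_{a})$, $\mathrm{W}(V_{s})$ and $\mathrm{W}(V)$. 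So your argument is salvageable in two ways: either adopt the rank-one definition of $\beta$ for subspaces, or add the hypothesis (sufficient for this paper) that the subspace is a unital algebra and supply the $[\mathcal{T}x]$ construction. As written, with the lattice definition of $\beta$ stated in the introduction, the key step fails and cannot be repaired.
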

The second result, proved by Kli\'{s} and Ptak in \cite{KP06}, shows
that hyperreflexivity is preserved under taking direct sums.
\begin{thm}[Kli\'{s}-Ptak]
\label{thm:directsum-hyperreflexive} Let $\{\mathcal{A}_{n}|n\in\mathbb{N}\}$
be a family of hyperreflexive subspaces with hyperreflexivity constants
bounded by $C$. Then the direct sum $\mathcal{A}=\bigoplus_{n\in\mathbb{N}}\mathcal{A}_{n}$
is hyperreflexive with distance constant at most $2+3C.$
\end{thm}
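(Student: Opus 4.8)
The plan is to fix $T\in\mathrm{B}(\mathcal{H})$, where $\mathcal{H}=\bigoplus_{n}\mathcal{H}_{n}$ and each $\mathcal{A}_{n}\subseteq\mathrm{B}(\mathcal{H}_{n})$, and to produce an element $A\in\mathcal{A}$ with $\|T-A\|\le(2+C)\,\beta(T,\mathcal{A})$; since $2+C\le 2+3C$, this proves (and in fact improves) the claimed bound. Write $T=(T_{mn})$ as an operator matrix relative to $\mathcal{H}=\bigoplus_{n}\mathcal{H}_{n}$, let $E_{n}$ be the projection onto $\mathcal{H}_{n}$, and let $\Delta(T)=\bigoplus_{n}E_{n}TE_{n}$ be the block-diagonal compression. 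I would split the estimate into an off-diagonal part $\|T-\Delta(T)\|$ and a diagonal part $\|\Delta(T)-A\|$. The key structural fact throughout is that, since every element of $\mathcal{A}$ is block diagonal, every subspace $\bigoplus_{n\in S}\mathcal{H}_{n}$ (for $S\subseteq\mathbb{N}$), and more generally every $\bigoplus_{n}K_{n}$ with $K_{n}\in\mathrm{Lat}(\mathcal{A}_{n})$, lies in $\mathrm{Lat}(\mathcal{A})$, giving a large supply of invariant projections against which to test $\beta(T,\mathcal{A})$.

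For the off-diagonal part I would prove $\|T-\Delta(T)\|\le 2\,\beta(T,\mathcal{A})$ by averaging over diagonal sign unitaries. For $\epsilon=(\epsilon_{n})\in\{\pm1\}^{\mathbb{N}}$ set $D_{\epsilon}=\sum_{n}\epsilon_{n}E_{n}$, a unitary, and let $S=\{n:\epsilon_{n}=1\}$, with invariant projections $P_{S},P_{S^{c}}$. A direct computation gives the identity
\[
\tfrac{1}{2}\bigl(T-D_{\epsilon}TD_{\epsilon}\bigr)=P_{S}^{\perp}TP_{S}+P_{S^{c}}^{\perp}TP_{S^{c}}.
\]
The two summands have mutually orthogonal domains and mutually orthogonal ranges, so in the $2\times2$ block form relative to $\mathcal{H}=P_{S}\mathcal{H}\oplus P_{S^{c}}\mathcal{H}$ their sum is off-diagonal with norm $\max(\|P_{S}^{\perp}TP_{S}\|,\|P_{S^{c}}^{\perp}TP_{S^{c}}\|)\le\beta(T,\mathcal{A})$. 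Hence $\|T-D_{\epsilon}TD_{\epsilon}\|\le 2\,\beta(T,\mathcal{A})$ for every $\epsilon$. Averaging $D_{\epsilon}TD_{\epsilon}$ over the signs recovers $\Delta(T)$, since $\mathbb{E}[\epsilon_{m}\epsilon_{n}]=\delta_{mn}$, so averaging over finitely many coordinates and passing to the weak operator limit (using lower semicontinuity of the norm) yields $\|T-\Delta(T)\|\le 2\,\beta(T,\mathcal{A})$.

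For the diagonal part I would first check $\beta(T_{nn},\mathcal{A}_{n})\le\beta(T,\mathcal{A})$ for each $n$: given $P\in\mathrm{Lat}(\mathcal{A}_{n})$, the projection $\tilde{P}$ acting as $P$ on $\mathcal{H}_{n}$ and $0$ elsewhere lies in $\mathrm{Lat}(\mathcal{A})$, and compressing $\tilde{P}^{\perp}T\tilde{P}$ to $\mathcal{H}_{n}$ gives exactly $P^{\perp}T_{nn}P$, so $\|P^{\perp}T_{nn}P\|\le\|\tilde{P}^{\perp}T\tilde{P}\|\le\beta(T,\mathcal{A})$. Hyperreflexivity of each $\mathcal{A}_{n}$ then supplies $A_{n}\in\mathcal{A}_{n}$ with $\|T_{nn}-A_{n}\|\le C\,\beta(T_{nn},\mathcal{A}_{n})+\varepsilon\le C\,\beta(T,\mathcal{A})+\varepsilon$. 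These $A_{n}$ are uniformly bounded, so $A=\bigoplus_{n}A_{n}\in\mathcal{A}$ and $\|\Delta(T)-A\|=\sup_{n}\|T_{nn}-A_{n}\|\le C\,\beta(T,\mathcal{A})+\varepsilon$. Combining,
\[
\mathrm{dist}(T,\mathcal{A})\le\|T-\Delta(T)\|+\|\Delta(T)-A\|\le(2+C)\,\beta(T,\mathcal{A})+\varepsilon,
\]
and letting $\varepsilon\to0$ finishes the proof.

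I expect the off-diagonal estimate to be the main point requiring care. The naive hope that the strictly upper and lower triangular parts of $T$ are each bounded by $\beta(T,\mathcal{A})$ fails, since triangular truncation is unbounded; the averaging over sign unitaries is precisely what sidesteps this, as it never isolates a triangular piece. The only genuinely delicate issue is justifying the average when there are infinitely many blocks, which I would handle by averaging the signs $\epsilon_{n}$ with $n\le N$ (fixing $\epsilon_{n}=1$ for $n>N$), noting that each such finite average is within $2\,\beta(T,\mathcal{A})$ of $T$, and taking $N\to\infty$ in the weak operator topology. Everything else, namely the block-matrix identity, the orthogonality of the two corner operators, and the lifting of invariant subspaces from the summands to the direct sum, is routine.
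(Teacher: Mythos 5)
Your proof is correct, but it cannot be compared line-by-line with the paper's argument, because the paper does not prove this statement at all: Theorem \ref{thm:directsum-hyperreflexive} is quoted from Kli\'{s} and Ptak \cite{KP06} and used as a black box in the proof of Proposition \ref{prop:n=00003D1}. Judged on its own, your argument is sound. The identity $\tfrac{1}{2}(T-D_{\epsilon}TD_{\epsilon})=P_{S}^{\perp}TP_{S}+P_{S^{c}}^{\perp}TP_{S^{c}}$ checks out by expanding both operators in $2\times2$ block form relative to $P_{S}\mathcal{H}\oplus P_{S^{c}}\mathcal{H}$; both $P_{S}$ and $P_{S^{c}}$ do lie in $\mathrm{Lat}(\mathcal{A})$, since every element of $\mathcal{A}$ commutes with each coordinate projection $E_{n}$; an off-diagonal $2\times2$ operator matrix has norm equal to the larger of its two corner norms; and the finite averaging followed by a WOT limit is legitimate because the averages are uniformly bounded and converge to $\Delta(T)$ against finitely supported vectors. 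The lifting $P\mapsto\tilde{P}$ together with the compression identity $E_{n}\tilde{P}^{\perp}T\tilde{P}E_{n}=P^{\perp}T_{nn}P$ is also correct, and it yields $\beta(T_{nn},\mathcal{A}_{n})\leq\beta(T,\mathcal{A})$ with no loss. Two remarks on how this compares with the sources. First, your off-diagonal estimate $\|T-\Delta(T)\|\leq2\beta(T,\mathcal{A})$ is essentially Rosenoer's distance estimate \cite{Ros82} for the von Neumann algebra $\bigoplus_{n}\mathrm{B}(\mathcal{H}_{n})$, whose commutant is abelian; this is the same device the paper itself invokes in the proof of Proposition \ref{prop:n=00003D1}, and your sign-averaging makes it self-contained. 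Second, you actually establish the stronger bound $2+C$: the stated constant $2+3C$ is what falls out of the cruder route that first transfers $\beta$ to the diagonal by the triangle inequality, $\beta(\Delta(T),\mathcal{A})\leq\beta(T,\mathcal{A})+\|T-\Delta(T)\|\leq3\beta(T,\mathcal{A})$, and only then applies hyperreflexivity of the blocks, whereas your direct use of the lifted projections $\tilde{P}$ (which annihilate the off-diagonal blocks of $T$ under compression) avoids that transfer entirely. Since $2+C\leq2+3C$, your bound implies the theorem as stated; propagated through the paper, it would even improve the constant $95$ of Proposition \ref{prop:n=00003D1} to $2(2+15)+1=35$.
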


We are now able to prove the main result in this section.
\begin{prop}
\label{prop:n=00003D1} Every isometry is hyperreflexive with distance
constant at most $95$.\end{prop}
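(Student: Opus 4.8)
The plan is to reduce the general case to two model building blocks---an analytic Toeplitz algebra and an abelian von Neumann algebra---and then to assemble the result mechanically from the two hereditary principles recorded above. Write $\mathcal{W}(T)$ for the WOT-closed (equivalently weak-$*$ closed) unital algebra generated by an operator $T$; this is the object whose hyperreflexivity must be established. Using Theorem~\ref{thm:classical-lebesgue-vn-wold} and absorbing the two unitary summands into a single unitary $W=V_{a}\oplus V_{s}$, I would write $V=V_{u}\oplus W$ with $V_{u}$ a unilateral shift and $W$ unitary, and then note that every polynomial factors as $p(V)=p(V_{u})\oplus p(W)$. Consequently $\mathcal{W}(V)$ sits as a weak-$*$ closed subspace of the block-diagonal algebra $\mathcal{A}=\mathcal{W}(V_{u})\oplus\mathcal{M}$, where $\mathcal{M}$ is any masa containing the abelian von Neumann algebra generated by $W$. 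The strategy is to prove that $\mathcal{A}$ is hyperreflexive and enjoys property $\mathbb{A}_{1}(1)$, and then to read off the conclusion for $\mathcal{W}(V)$ from Theorem~\ref{thm:hereditary-hyperreflexive}.

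For the first block, $\mathcal{W}(V_{u})$ is (an ampliation of) the analytic Toeplitz algebra, which is hyperreflexive by the main result of \cite{Dav87} and has property $\mathbb{A}_{1}(1)$ by the classical factorization theory of $H^{\infty}$. For the second block, $\mathcal{M}$ is a maximal abelian self-adjoint algebra, hence hyperreflexive with a small distance constant and again possessing $\mathbb{A}_{1}(1)$, since weak-$*$ continuous functionals on a multiplication algebra factor through $L^{1}$. A convenient feature of routing the entire unitary part through $\mathcal{M}$ is that I never need to identify $\mathcal{W}(W)$ explicitly: it is automatically a weak-$*$ closed subspace of the masa, so at this stage the finer Lebesgue splitting into absolutely continuous and singular parts is not actually needed (although for the $n\ge 2$ analogue one may prefer to place the absolutely continuous summand inside the Toeplitz block instead).

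It then remains only to track constants. Writing $C_{0}$ for a common bound on the distance constants of the two blocks, Theorem~\ref{thm:directsum-hyperreflexive} gives that $\mathcal{A}$ is hyperreflexive with constant at most $2+3C_{0}$, while property $\mathbb{A}_{1}(1)$ passes to the direct sum by an elementary balancing of the norms of the factoring vectors. Since $\mathcal{W}(V)$ is a weak-$*$ closed subspace of $\mathcal{A}$, Theorem~\ref{thm:hereditary-hyperreflexive} yields a distance constant of at most $2(2+3C_{0})+1=6C_{0}+5$, and taking $C_{0}=15$ produces the stated bound $95$. The main obstacle, and the only place where genuine content beyond bookkeeping enters, is pinning down the two base constants: extracting an explicit hyperreflexivity constant for the possibly ampliated analytic Toeplitz algebra from \cite{Dav87} and confirming that $\mathbb{A}_{1}(1)$ survives the ampliation, together with the corresponding (small) estimate for masas. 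Everything else---the factorization $p(V)=p(V_{u})\oplus p(W)$, the weak-$*$ closedness of $\mathcal{W}(V)$ (a WOT-closed convex set is weak-$*$ closed), and the passage of $\mathbb{A}_{1}(1)$ through direct sums---is routine.
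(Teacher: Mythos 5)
Your overall architecture coincides with the paper's: both arguments run the Lebesgue--von Neumann--Wold decomposition (Theorem \ref{thm:classical-lebesgue-vn-wold}), embed $\mathrm{W}(V)$ as a weak-$*$ closed subspace of a block-diagonal algebra whose summands are the shift algebra and abelian von Neumann algebra(s), apply the Kli\'{s}--Ptak direct sum theorem (Theorem \ref{thm:directsum-hyperreflexive}) to get the constant $2+3\cdot 15=47$, and finish with the Kraus--Larson--Davidson hereditary theorem (Theorem \ref{thm:hereditary-hyperreflexive}) to get $2\cdot 47+1=95$. Your merging of $V_{a}\oplus V_{s}$ into a single unitary routed through a masa is a harmless cosmetic variation, since an abelian von Neumann algebra is hyperreflexive with constant at most $2$ by \cite{Ros82} either way.

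The genuine gap is the base constant $C_{0}=15$ for $\mathrm{W}(V_{u})$, which you reverse-engineer from the desired answer rather than derive. Davidson's theorem \cite{Dav87}, with the improved constant $13$ from \cite{KP05}, concerns the unilateral shift of multiplicity \emph{one}. The shift part of a general isometry has arbitrary multiplicity $\alpha$, and $\mathrm{W}(V_{u})$ is then the algebra of ampliations $\varphi(S)^{(\alpha)}$, a proper weakly closed subalgebra of $\bigoplus_{i}\mathrm{W}(S_{i})$; its hyperreflexivity with an explicit constant does not follow formally from the multiplicity-one case, nor from your direct sum theorem (which treats the full direct sum, not the diagonal copy), nor from the hereditary theorem without inflating the constant past $15$. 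This is precisely where the paper does its real work: it introduces the injective von Neumann algebra $\mathcal{M}=\bigoplus_{i}\mathrm{B}(\mathcal{H}_{i})$, which has abelian commutant and contains $\mathrm{W}(V_{u})$, uses Rosenoer's technique \cite{Ros82} and the inequality $\beta_{\mathcal{M}}(T)\leq\beta_{\mathrm{W}(V_{u})}(T)$ to show $\|T-\Phi(T)\|\leq 2\beta_{\mathrm{W}(V_{u})}(T)$ for the expectation $\Phi$ onto $\mathcal{M}$, and then bounds $\mathrm{dist}(\Phi(T),\mathrm{W}(V_{u}))$ by $13\,\beta_{\mathrm{W}(V_{u})}(T)$, giving $2+13=15$. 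You explicitly flag the ampliated Toeplitz algebra as ``the main obstacle,'' but flagging is not filling: without this argument (and the companion verification that property $\mathbb{A}_{1}(1)$ survives the ampliation, which you also defer) your proof establishes no numerical bound at all, since $C_{0}$ is left undetermined. The remaining bookkeeping in your writeup is correct and matches the paper's.
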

\begin{proof}
Let $V$ be an isometry with Lebesgue-von Neumann-Wold decomposition
$V=V_{u}\oplus V_{a}\oplus V_{s}$ as in Theorem \ref{thm:classical-lebesgue-vn-wold}.
Then it's clear that 
\[
\mathrm{W}(V)\subseteq\mathrm{W}(V_{u})\oplus\mathrm{W}^{*}(V_{a})\oplus\mathrm{W}^{*}(V_{s}).
\]

We will first show that $W(V_{u})$ is hyperreflexive with distance
constant at most $15$. By \cite{Dav87}, unilateral shifts of multiplicity
$1$ are hyperreflexive, and by \cite{KP05}, this distance constant
is at most $13$. Now suppose $V_{u}$ has multiplicity $\alpha$
and acts on the Hilbert space $\mathcal{H}=\bigoplus\mathcal{H}_{i}$.
Let 
\[
\mathcal{M}=\bigoplus\mathcal{B}(\mathcal{H}_{i})\subseteq\mathcal{B}(\mathcal{H}).
\]
Then $\mathcal{M}$ is an injective von Neumann algebra with abelian
commutant $\mathcal{M}'$. Denote by $\Phi$ the expectation from
$\mathcal{B}(\mathcal{H})$ onto $\mathcal{M}$. Take any $T\in\mathcal{B}(\mathcal{H}).$
Following the methods of \cite{Ros82}, and noting that $\beta_{\mathcal{M}}(T)\leq\beta_{W(V_{u})}(T)$,
we see that
\[
\|T-\Phi(T)\|\leq2\beta_{W(V_{u})}(T).
\]
We have now that
\begin{align*}
\mathrm{dist}(T,W(V_{u})) & \leq\mathrm{dist}(T-\Phi(T),W(V_{u}))+\mathrm{dist}(\Phi(T),W(V_{u}))\\
 & \leq\|T-\Phi(T)\|+\mathrm{dist}(\Phi(T),W(V_{u})).\\
 & \leq2\beta_{W(V_{u})}(T)+13\beta_{W(V_{u})}(T)\\
 & =15\beta_{W(V_{u})}(T).
\end{align*}

Since $\mathrm{W}^{*}(V_{a})$ and $\mathrm{W}^{*}(V_{s})$ are abelian
von Neuman algebras, they are both hyperreflexive with distance constant
at most $2$ by \cite{Ros82}. Again, since $\mathrm{W}^{*}(V_{a})$
and $\mathrm{W}^{*}(V_{s})$ are abelian von Neumann algebras, they
also have property $\mathbb{A}_{1}(1)$ (see for example Theorem 60.1
of \cite{Con00}). Therefore, by\ref{thm:hereditary-hyperreflexive},
$ $$\mathrm{W}(V_{a})$ and $\mathrm{W}(V_{s})$ are hyperreflexive
with distance constant at most $5.$

By Theorem \ref{thm:directsum-hyperreflexive}, the algebra $\mathrm{W}(V_{u})\oplus\mathrm{W}^{*}(V_{a})\oplus\mathrm{W}^{*}(V_{s})$
is hyperreflexive with distance constant at most $47$. Finally, by
Theorem \ref{thm:hereditary-hyperreflexive} we get that $W(V)$ is
hyperreflexive with distance constant at most $95$.
\end{proof}

\section{\label{sec:isom-tuples}The hyperreflexivity of isometric tuples}

In this section, we will prove that every isometric tuple is hyperreflexive,
which is the main result of this paper. No straightforward generalization
of the approach taken in Section \ref{sec:isometries} will suffice
here, because the structure of an isometric tuple can be substantially
more complicated than the structure of an isometry. We will utilize
the structure theorem for isometric tuples from \cite{Ken12}. Before
stating that result, we need to introduce some terminology.

Let $\mathcal{F}_{n}$ denote the full Fock space over $\mathbb{C}^{n}$.
That is,
\[
\mathcal{F}_{n}=\mathbb{C}\oplus\mathbb{C}^{n}\oplus(\mathbb{C}^{n})^{\otimes2}\oplus(\mathbb{C}^{n})^{\otimes3}\oplus\ldots
\]
For a fixed orthonormal basis $\{\xi_{1},\ldots,\xi_{n}\}$ of $\mathbb{C}^{n}$,
let $L_{1},\ldots,L_{n}$ denote the left creation operators on $\mathcal{F}_{d}$.
That is,
\[
L_{i}(\xi_{j_{1}}\otimes\ldots\otimes\xi_{j_{k}})=\xi_{i}\otimes\xi_{j_{1}}\otimes\ldots\otimes\xi_{j_{k}},\quad1\leq i,j_{1},\ldots,j_{k}\leq n.
\]
The \emph{unilateral $n$-shift} is the isometric tuple $L=(L_{1},\ldots,L_{n})$,
and the \emph{noncommutative analytic Toeplitz algebra} $\mathcal{L}_{n}$
is the weakly closed algebra $\mathrm{W}(L)$ generated by $L_{1},\ldots,L_{n}$.

The motivation for these names is the fact that, for $n=1$, $L$
can be identified with the classical unilateral shift, and $\mathcal{L}_{n}$
can be identified with the classical algebra $H^{\infty}$ of bounded
analytic functions on the complex unit disk. The study of these objects
was initiated by Popescu in \cite{Pop91} and \cite{Pop96}, where
they were shown to possess a great deal of analytic structure. They
were also studied in detail by Davidson and Pitts in \cite{DP98}
and \cite{DP99}, and more recently in \cite{Ken11} and \cite{Ken12}.
It has become clear in recent years that these objects play a central
role in multivariate operator theory (see for example \cite{Dav01}).

\begin{defn}
Let $V=\left(V_{1},\ldots,V_{n}\right)$ be an isometric $n$-tuple,
for $n\geq2$. Then 
\begin{enumerate}
\item $V$ is a \emph{unilateral shift} if it is unitarily equivalent to
an ampliation of the unilateral $n$-shift,
\item $V$ is \emph{absolutely continuous} if the weakly closed algebra
$\mathrm{W}(V_{1},\ldots,V_{n})$ is isomorphic to the noncommutative
analytic Toeplitz algebra $\mathcal{L}_{n}$,
\item $V$ is \emph{singular} if the weakly closed algebra $\mathrm{W}(V_{1},\ldots,V_{n})$
is a von Neumann algebra,
\item $V$ is of \emph{dilation type} if it has no summand that is absolutely
continuous or singular.
\end{enumerate}
\end{defn}

This definition merits a few remarks. First, the existence of singular
isometric tuples was an open problem for some time before it was established
by Read in \cite{Rea05} (see also \cite{Dav06} for an exposition).
Second, isometric tuples of dilation type do not appear in the classical
setting of a single isometry. In the higher dimensional setting, they
arise as the minimal isometric dilation of contractive tuples (see
\cite{Ken12} for more details).

The following theorem is the Lebesgue-von Neumann-Wold decomposition
of an isometric tuple from \cite{Ken12}.

\begin{thm}[Lebesgue-von Neumann-Wold Decomposition]
\label{thm:decomp-isom-tuple}Let $V=\left(V_{1},\ldots,V_{n}\right)$
be an isometric $n$-tuple. For $n\geq2$, $V$ can be decomposed
as 
\begin{equation}
V=V_{u}\oplus V_{a}\oplus V_{s}\oplus V_{d},\label{eq:decomp-isom-tuple}
\end{equation}
where $V_{u}$ is a unilateral $n$-shift, $V_{a}$ is an absolutely
continuous $n$-tuple, $V_{s}$ is a singular $n$-tuple and $V_{d}$
is an isometric $n$-tuple of dilation type.
\end{thm}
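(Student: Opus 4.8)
The plan is to assemble the decomposition in two stages: first peel off the pure shift part by a Wold-type argument, and then resolve the remaining fully coisometric tuple using the structure theory of free semigroup algebras together with the Lebesgue-decomposition machinery for isometric tuples. Throughout I assume $n \geq 2$, since the case $n = 1$ is Theorem \ref{thm:classical-lebesgue-vn-wold}.

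First I would apply Popescu's noncommutative Wold decomposition \cite{Pop91} to $V$. Writing $\mathcal{W} = \mathcal{H} \ominus \sum_{i=1}^{n} V_i\mathcal{H}$ for the wandering subspace, the subspace $\mathcal{H}_u = \bigoplus_\alpha V_\alpha\mathcal{W}$ (the orthogonal sum over all finite words $\alpha$, with $V_\alpha$ the corresponding product of generators) reduces the tuple, and $V|_{\mathcal{H}_u}$ is unitarily equivalent to an ampliation of the unilateral $n$-shift of multiplicity $\dim\mathcal{W}$; this is $V_u$. On the complementary reducing subspace $\mathcal{H}_c = \mathcal{H} \ominus \mathcal{H}_u$ the restriction $V_c = V|_{\mathcal{H}_c}$ is fully coisometric, i.e. $\sum_i (V_c)_i (V_c)_i^* = I$. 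It then remains to split a fully coisometric tuple into absolutely continuous, singular, and dilation-type summands.

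For the second stage I would invoke the structure theorem for free semigroup algebras of Davidson, Katsoulis and Pitts \cite{DKP01}. Applied to $\mathfrak{S} = \mathrm{W}(V_c)$ it produces a structure projection $P \in \mathfrak{S}$ for which $P^\perp\mathcal{H}_c$ is invariant and the restriction of $\mathfrak{S}$ there is analytic, that is, completely isometrically and weak-$*$ homeomorphically isomorphic to $\mathcal{L}_n$, while $P\mathcal{H}_c$ is coinvariant and the compression of $\mathfrak{S}$ generates a von Neumann algebra. The crucial feature is that this is only an invariant, not a reducing, decomposition, which is exactly why a dilation-type piece can survive. To extract genuine reducing summands I would use the Lebesgue-decomposition theory for isometric tuples from \cite{Ken11}: the absolutely continuous vectors form a reducing subspace $\mathcal{H}_a$, on which $V_c$ restricts to the maximal absolutely continuous summand $V_a$. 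Passing to the complement of $\mathcal{H}_a$ inside $\mathcal{H}_c$, I would then split off the maximal reducing subspace $\mathcal{H}_s$ on which the generated algebra is a von Neumann algebra, producing the singular summand $V_s$. The orthogonal remainder $\mathcal{H}_d$ carries a tuple $V_d$ possessing no absolutely continuous and no singular reducing summand, hence of dilation type by definition, and by construction the four pieces are mutually orthogonal and exhaust $\mathcal{H}$.

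The step I expect to be the main obstacle is the separation in the second stage: showing that the absolutely continuous vectors genuinely form a reducing subspace carrying an absolutely continuous tuple, and that after removing the maximal absolutely continuous and maximal singular summands nothing of either type remains. Since the structure projection $P$ need not be reducing, the analytic and von Neumann parts are authentically entangled in the dilation-type piece --- Read's construction \cite{Rea05} shows singular tuples exist, so this entanglement is not vacuous --- and the delicate work lies in proving that this entanglement is the sole obstruction to a clean reducing splitting. Once that is established, verifying mutual orthogonality and exhaustion of the four summands completes the argument.
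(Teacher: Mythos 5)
First, a point of reference: the paper itself gives no proof of Theorem \ref{thm:decomp-isom-tuple}; it is imported verbatim from \cite{Ken12}, so the only ``proof'' here is a citation. Measured against what Kennedy actually does in \cite{Ken12}, your first stage is correct and standard: Popescu's noncommutative Wold decomposition \cite{Pop91} splits $V$ into an ampliation of the unilateral $n$-shift plus a fully coisometric (Cuntz) tuple, and your overall architecture --- Wold first, then a Lebesgue-type splitting of the coisometric part --- is the right scaffolding.

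The gap is in the second stage, and it is not a small one. The assertion that ``the absolutely continuous vectors form a reducing subspace'' is precisely the central technical theorem of \cite{Ken12}; it is, in essence, the statement you are being asked to prove. It is not available in \cite{Ken11}: what one has there (building on the earlier work of Davidson--Li--Pitts on absolutely continuous representations) is that the absolutely continuous vectors form a closed subspace \emph{invariant} under the free semigroup algebra. Invariance is cheap; reducing-ness for $n \geq 2$ is the hard part, and your proposal inherits it rather than resolves it --- indeed you flag it yourself as ``the main obstacle'' without supplying an argument. The same applies to the singular piece: you need that the closed span of all singular reducing subspaces is itself a reducing subspace on which the generated algebra is a von Neumann algebra, and that the remainder after removing $\mathcal{H}_a$ and $\mathcal{H}_s$ contains no singular summand; this requires the parallel theory of singular vectors developed in \cite{Ken12}, not merely a maximality argument. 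Two smaller points: your invocation of the structure theorem of \cite{DKP01} does no actual work in the splitting (the structure projection $P$ is never used to produce a reducing subspace, and as you note it cannot be, since its range is only coinvariant), and your claim that the remainder has no absolutely continuous summand tacitly uses the equivalence, for $n \geq 2$, between ``every vector is absolutely continuous'' and ``the generated weakly closed algebra is isomorphic to $\mathcal{L}_n$'' --- itself a nontrivial theorem of \cite{Ken11} and \cite{Ken12}, not the definition in force here. In short, you have correctly reconstructed the outline of Kennedy's proof and correctly located where the difficulty sits, but the proposal assumes that difficulty away instead of overcoming it.
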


It follows from the results in \cite{Ken11} and \cite{Ken12} that
an isometric tuple $V$ is absolutely continuous if and only if $V_{s}=0$
and $V_{d}=0$ in the decomposition (\ref{eq:decomp-isom-tuple}).
The following result was also obtained in \cite{Ken12}.
\begin{prop}
\label{prop:analytic-fsa-hyperreflexive}For $n\geq2$, every absolutely
continuous $n$-tuple is hyperreflexive with distance constant at
most $3$. 
\end{prop}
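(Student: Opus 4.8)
The plan is to deduce the proposition from Bercovici's general hyperreflexivity theorem in \cite{Ber98}, which states that a reflexive weak-$*$ closed subspace of $\mathrm{B}(\mathcal{H})$ having property $\mathbb{A}_{1}(1)$ is hyperreflexive with distance constant at most $3$. Thus it suffices to verify these two hypotheses for the free semigroup algebra $\mathrm{W}(V)$ of an absolutely continuous $n$-tuple $V$. Reflexivity is the easier ingredient: by the results of \cite{Ken11}, every absolutely continuous isometric tuple is reflexive, so I would simply invoke that (and in any case reflexivity will also follow once property $\mathbb{A}_{1}(1)$ is in hand).

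The substantive work is establishing property $\mathbb{A}_{1}(1)$, namely that every weak-$*$ continuous functional $\varphi$ on $\mathrm{W}(V)$ factors, up to an arbitrarily small loss in norm, as a single vector functional $S\mapsto\langle Sx,y\rangle$ with $\|x\|\,\|y\|<(1+\varepsilon)\|\varphi\|$. Here I would use the wandering-vector description of absolute continuity developed in \cite{Ken11,Ken12}. A wandering vector $\xi$ for $V$ generates a cyclic invariant subspace on which $V$ restricts to a copy of the unilateral $n$-shift, so that on this piece $\mathrm{W}(V)$ is modelled by the creation operators generating $\mathcal{L}_{n}$ on the full Fock space $\mathcal{F}_{n}$. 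Davidson and Pitts proved in \cite{DP99} that $\mathcal{L}_{n}$ has property $\mathbb{A}_{1}(1)$, the requisite factorization being produced from the wandering (vacuum) vector of the shift. The plan is to run this Fock-space factorization on the cyclic pieces arising from wandering vectors of $V$, and then to assemble these local factorizations into one that represents a general $\varphi$.

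The hard part will be exactly this assembly. A single wandering vector only exposes a cyclic summand of $\mathcal{H}$, whereas an arbitrary weak-$*$ continuous functional need not be carried by one such summand, so I expect to need the structure theory of \cite{Ken11,Ken12} to guarantee that wandering vectors are abundant enough to approximate $\varphi$ in the appropriate sense, together with careful norm bookkeeping so that the estimate $\|x\|\,\|y\|<(1+\varepsilon)\|\varphi\|$ is not lost in the limit. Once property $\mathbb{A}_{1}(1)$ is established, it combines with reflexivity through Bercovici's theorem to yield hyperreflexivity with distance constant at most $3$, which is the assertion of the proposition.
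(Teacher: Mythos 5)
The first thing to note is that the paper itself contains no proof of this proposition: it is quoted directly from \cite{Ken12}, where it is obtained by combining the wandering vector theorem of \cite{Ken11} with Bercovici's hyperreflexivity theorem. Your outline follows that same general route, but it rests on a misstatement of the key theorem, and this is a genuine gap. Bercovici's theorem in \cite{Ber98} does \emph{not} say that a reflexive weak-$*$ closed subspace with property $\mathbb{A}_{1}(1)$ is hyperreflexive with constant at most $3$. Its hypothesis is the strictly stronger property $X_{0,1}$, verified in the main application by exhibiting two isometries with orthogonal ranges in the commutant: beyond the factorization $\varphi=[xy^{*}]$ with $\|x\|\,\|y\|<(1+\varepsilon)\|\varphi\|$, one must be able to choose the factoring vectors asymptotically orthogonal to any prescribed finite set of vectors, and it is exactly this extra freedom that drives Bercovici's argument. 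Reflexivity together with $\mathbb{A}_{1}(1)$ is not known to imply hyperreflexivity at all. The conceptual reason is that hyperreflexivity is a quantitative statement about the preannihilator $\mathcal{S}_{\perp}$ (Arveson's criterion: every element of $\mathcal{S}_{\perp}$ is an $\ell^{1}$-controlled sum of rank-one functionals in $\mathcal{S}_{\perp}$), whereas $\mathbb{A}_{1}(1)$ is a statement about the quotient of the trace class by $\mathcal{S}_{\perp}$ and says nothing about $\mathcal{S}_{\perp}$ itself; this also undercuts your parenthetical claim that $\mathbb{A}_{1}(1)$ would yield reflexivity. Note that in the present paper $\mathbb{A}_{1}(1)$ is only ever used in tandem with hyperreflexivity of an ambient space (Theorem \ref{thm:hereditary-hyperreflexive}), never as a substitute for it. So even if you carried out your ``hard part'' in full, your final step would not deliver the proposition.

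Moreover, the assembly problem you flag as the main difficulty is not where the real work lies. For $\mathbb{A}_{1}(1)$ no gluing over many wandering vectors is needed: absolute continuity gives a weak-$*$ homeomorphic isomorphism $\mathrm{W}(V)\cong\mathcal{L}_{n}$ (part of the type~L theory of \cite{DP99,DKP01}), so a weak-$*$ continuous functional on $\mathrm{W}(V)$ transfers to $\mathcal{L}_{n}$, is factored there by the Davidson--Pitts theorem, and the factoring vectors are carried back into the single cyclic subspace $\overline{\mathrm{W}(V)\xi}\cong\mathcal{F}_{n}$ generated by \emph{one} wandering vector $\xi$; the existence of even one wandering vector is the main theorem of \cite{Ken11} and is the deepest ingredient. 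The genuine work is upgrading this to $X_{0,1}$: the images $\widetilde{R}_{w}$ in $\overline{\mathrm{W}(V)\xi}$ of the right creation operators commute with the restriction of $\mathrm{W}(V)$ to that subspace, so replacing $x,y$ by $\widetilde{R}_{w}x,\widetilde{R}_{w}y$ implements the same functional; since the ranges of the $\widetilde{R}_{w}$ over distinct words $w$ of a fixed length are pairwise orthogonal (here one uses that $\xi$ is wandering), a pigeonhole argument produces a long word $w$ making the factoring vectors nearly orthogonal to any given finite set of vectors in $\mathcal{H}$. With $X_{0,1}$ in hand, Bercovici's theorem, correctly stated, yields the constant $3$. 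So the repair is to replace $\mathbb{A}_{1}(1)$ by $X_{0,1}$ and supply this orthogonalization step, not to patch together factorizations supported on many wandering subspaces.
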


To handle the case of singular isometric tuples and isometric tuples
of dilation type, we will need a better understanding of the von Neumann
algebra generated by certain isometric tuples. An isometric tuple
$V=(V_{1},\ldots,V_{n})$ is said to be \emph{unitary} if the row
operator $(V_{1},\ldots,V_{n}):\mathcal{H}^{n}\to\mathcal{H}$ is
surjective. Since, in the classical case, a unitary is precisely a
surjective isometry, this is a natural higher dimensional generalization
of the notion of a unitary.
\begin{prop}
\label{prop:commutant-vn-alg-injective}For $n\geq2$, the commutant
of the von Neumann algebra generated by a unitary $n$-tuple is injective.\end{prop}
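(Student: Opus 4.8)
The plan is to recognize that a unitary $n$-tuple is exactly a tuple of Cuntz isometries, and then to prove injectivity of the commutant directly by constructing a conditional expectation onto it. First I would note that since $V=(V_{1},\ldots,V_{n})$ is isometric with surjective row operator, it satisfies the Cuntz relations
\[
V_{i}^{*}V_{j}=\delta_{ij}I,\qquad\sum_{i=1}^{n}V_{i}V_{i}^{*}=I,
\]
so $\mathcal{M}:=\mathrm{W}^{*}(V_{1},\ldots,V_{n})$ is the von Neumann algebra generated by a representation of the Cuntz algebra $\mathcal{O}_{n}$. Since $\mathcal{O}_{n}$ is nuclear, $\mathcal{M}$ is injective, and a von Neumann algebra is injective if and only if its commutant is; this already settles the statement. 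However, to keep the argument self-contained and to exhibit the expectation explicitly, I would instead construct a norm-one projection of $\mathrm{B}(\mathcal{H})$ onto $\mathcal{M}'$, which is precisely the defining property of injectivity of $\mathcal{M}'$.

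For a word $w=i_{1}\cdots i_{k}$ in the letters $\{1,\ldots,n\}$, write $V_{w}=V_{i_{1}}\cdots V_{i_{k}}$, and define unital completely positive maps $\Phi_{k}$ on $\mathrm{B}(\mathcal{H})$ by
\[
\Phi_{k}(T)=\sum_{|w|=k}V_{w}TV_{w}^{*},\qquad T\in\mathrm{B}(\mathcal{H}).
\]
These are unital because $\sum_{|w|=k}V_{w}V_{w}^{*}=I$, which follows by induction from $\sum_{i}V_{i}V_{i}^{*}=I$. The key computation is the intertwining identity
\[
\Phi_{k}(T)V_{j}=V_{j}\Phi_{k-1}(T),
\]
which I would verify using $V_{w}^{*}V_{j}=\delta_{i_{1}j}V_{u}^{*}$, where $u$ is the tail of $w$: summing over the words beginning with $j$ collapses the expression to $V_{j}\Phi_{k-1}(T)$. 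Consequently the commutator satisfies $V_{j}\Phi_{k}(T)-\Phi_{k}(T)V_{j}=V_{j}\bigl(\Phi_{k}(T)-\Phi_{k-1}(T)\bigr)$.

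Finally, I would fix a shift-invariant Banach limit $\mathrm{LIM}$ and define $E(T)$ by the bounded sesquilinear form $\langle E(T)\xi,\eta\rangle=\mathrm{LIM}_{k}\langle\Phi_{k}(T)\xi,\eta\rangle$. Then $E$ is unital and completely positive as a pointwise weak-$*$ limit of such maps, and it is $*$-preserving. The invariance $\mathrm{LIM}_{k}\,a_{k}=\mathrm{LIM}_{k}\,a_{k-1}$ together with the commutator identity forces $E(T)V_{j}=V_{j}E(T)$ for all $j$, and applying this to $T^{*}$ and taking adjoints gives $E(T)V_{j}^{*}=V_{j}^{*}E(T)$; hence $E(T)\in\mathcal{M}'$ for every $T$. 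On the other hand, any $X\in\mathcal{M}'$ satisfies $\Phi_{k}(X)=X$, so $E$ restricts to the identity on $\mathcal{M}'$. Therefore $E$ is a norm-one idempotent of $\mathrm{B}(\mathcal{H})$ onto $\mathcal{M}'$, and $\mathcal{M}'$ is injective. The main obstacle is the ``off-by-one'' discrepancy in the intertwining identity, which prevents any single $\Phi_{k}$ from mapping into the commutant; the device of a shift-invariant mean is exactly what is needed to absorb this discrepancy in the limit.
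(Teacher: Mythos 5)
Your proof is correct, and at its core it runs on the same engine as the paper's: both build a norm-one projection of $\mathrm{B}(\mathcal{H})$ onto $\mathcal{M}'$ from the unital completely positive map $\Gamma(T)=\sum_{i}V_{i}TV_{i}^{*}$ --- note that your $\Phi_{k}$ is exactly $\Gamma^{k}$. The difference lies in how the projection is produced and how its range is shown to sit inside the commutant. The paper first proves the fixed-point identification $\mathcal{M}'=\mathcal{F}(\Gamma)$ (the nontrivial inclusion comes from multiplying $T=\Gamma(T)$ by $V_{j}^{*}$ on the left and by $V_{j}$ on the right) and then cites Lemma 2 of \cite{Arv72} for the existence of a completely positive contractive idempotent onto the fixed-point set. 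You sidestep both steps: the intertwining identity $\Phi_{k}(T)V_{j}=V_{j}\Phi_{k-1}(T)$ together with shift-invariance of the Banach limit shows directly that $E$ maps into $\mathcal{M}'$, and the relation $\sum_{|w|=k}V_{w}V_{w}^{*}=I$ shows that $E$ fixes $\mathcal{M}'$. What your route buys is self-containedness: you have in effect re-proved, in this special case, the Arveson lemma that the paper imports (that lemma is itself established by an invariant-mean argument of precisely this kind). Two minor points: $E$ is not literally a ``pointwise weak-$*$ limit'' of the $\Phi_{k}$, but a Banach-limit average; positivity (and $*$-preservation) still follow because $\mathrm{LIM}$ is a state on $\ell^{\infty}$, and in any case a unital contractive idempotent onto $\mathcal{M}'$ is all that injectivity requires. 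Also, your opening appeal to nuclearity of $\mathcal{O}_{n}$ is exactly the alternative argument the paper records in the remark following its proof, so it is a legitimate shortcut, though not the self-contained one either you or the paper ultimately pursue.
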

\begin{proof}
Let $U=(U_{1},\ldots,U_{n})$ be a unitary $n$-tuple acting on a
Hilbert space $\mathcal{H}$, and let $\mathcal{M}=\mathrm{W}^{*}(U_{1},\ldots,U_{n})$
denote the von Neumann algebra generated by $U$. Define a completely
positive map $\Gamma:\mathrm{B}\left(\mathcal{H}\right)\to\mathrm{B}\left(\mathcal{H}\right)$
by 
\[
\Gamma\left(T\right)=\sum_{i=1}^{n}U_{i}TU_{i}^{*},\quad T\in\mathrm{B}\left(\mathcal{H}\right),
\]
and let 
\[
\mathcal{F}\left(\Gamma\right)=\left\{ T\in\mathrm{B}\left(\mathcal{H}\right)\mid\Gamma\left(T\right)=T\right\} 
\]
denote the set of fixed points of $\Gamma$. Let $\mathcal{M}'$ denote
the commutant of $\mathcal{M}$. We will first show that $\mathcal{M}'=\mathcal{F}\left(\Gamma\right)$.

Suppose first that $T$ belongs to $\mathcal{M}'$. Then 
\[
\Gamma\left(T\right)=\sum_{i=1}^{n}U_{i}TU_{i}^{*}=T\sum_{i=1}^{n}U_{i}U_{i}^{*}=T,
\]
and hence $T\in\mathcal{F}\left(\Gamma\right)$. Now suppose that
$T$ belongs to $\mathcal{F}\left(\Gamma\right)$. Then 
\[
T=\Gamma\left(T\right)=\sum_{i=1}^{n}U_{i}TU_{i}^{*}.
\]
Therefore, for each $1\leq j\leq n$, multiplying $T$ on the left
by $U_{j}^{*}$ gives $U_{j}^{*}T=TU_{j}^{*}$, and multiplying $T$
on the right by $U_{j}$ gives $TU_{j}=U_{j}T$. Hence $T\in\mathcal{M}'$.
Thus we see that $\mathcal{M}'=\mathcal{F}\left(\Gamma\right)$.

By Lemma 2 of \cite{Arv72}, there is a completely positive and contractive
idempotent map $\Phi:\mathrm{B}\left(\mathcal{H}\right)\to\mathrm{B}\left(\mathcal{H}\right)$
with the property that $\mathrm{Ran}\,\Phi=\mathcal{F}\left(\Gamma\right)$.
We note that the existence of the map $\Phi$ has become a standard
tool in the theory of completely positive maps. It can be realized
as the strong operator limit 
\[
\Phi\left(X\right)=\lim_{k\to\infty}\Gamma^{k}\left(X\right),\quad X\in\mathrm{B}\left(\mathcal{H}\right).
\]
Note that since $U$ is unitary, $\Gamma\left(I\right)=I$. Therefore,
$\Phi\left(I\right)=I$ which gives $\left\Vert \Phi\right\Vert =1$.
It follows that $\Phi$ is a projection from $\mathrm{B}\left(\mathcal{H}\right)$
to $\mathcal{M}'$, and hence that $\mathcal{M}'$ is injective. 
\end{proof}

We note that, in addition to the direct proof given here, the previous
result can also be obtained using some deep results from the theory
of C{*}-algebras and von Neumann algebras. Indeed, the C{*}-algebra
generated by a unitary $n$-tuple is isomorphic to the Cuntz algebra
$\mathcal{O}_{n}$, and hence is nuclear \cite{Cu77}. Therefore,
the results in \cite{CE78} imply that the von Neumann algebra generated
by a unitary $n$-tuple is injective, and it follows from the theory
of injective von Neumann algebras that the commutant is injective.

\begin{cor}
\label{cor:singular-fsa-hyperreflexive} For $n\geq2$, every singular
isometric $n$-tuple is hyperreflexive with distance constant at most
$4$.\end{cor}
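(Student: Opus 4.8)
The plan is to reduce to the unitary case and then exploit the injectivity supplied by Proposition~\ref{prop:commutant-vn-alg-injective}. First I would observe that a singular isometric tuple $V=(V_1,\dots,V_n)$ is automatically unitary. This is part of the structure theory of free semigroup algebras (\cite{DP99}, \cite{Ken12}): the obstruction to unitarity is the defect projection $P=I-\sum_i V_iV_i^{*}$, which lies in $\mathcal{M}:=\mathrm{W}(V)$ since $\mathcal{M}$ is a von Neumann algebra, and a direct computation using $V_i^{*}V_j=\delta_{ij}I$ gives $V_i^{*}P=0=PV_i$ for every $i$. The subspace $\mathcal{K}=\overline{\bigvee_\alpha V_\alpha P\mathcal{H}}$ then reduces $\mathcal{M}$ and carries a unilateral $n$-shift summand of $V$, whose weakly closed algebra is $\mathcal{L}_n$; as $\mathcal{L}_n$ is not self-adjoint for $n\ge2$ while $\mathcal{M}$ is, this forces $\mathcal{K}=0$, i.e. $P=0$. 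Thus $V$ is unitary, Proposition~\ref{prop:commutant-vn-alg-injective} applies, and $\mathcal{M}'$ is injective.

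Next I would set up an averaging argument for the distance estimate. For a von Neumann algebra one has $\mathrm{Lat}(\mathcal{M})=\mathcal{P}(\mathcal{M}')$, the projections in the commutant, so that
\[
\beta(T,\mathcal{M})=\sup\{\|P^{\perp}TP\|\mid P\in\mathcal{P}(\mathcal{M}')\},\quad T\in\mathrm{B}(\mathcal{H}).
\]
Because $\mathcal{M}'$ is injective it has Schwartz's property $P$: for each $T$, the weak-$*$ closed convex hull of $\{UTU^{*}\mid U\in\mathcal{U}(\mathcal{M}')\}$ meets $(\mathcal{M}')'=\mathcal{M}$. Choosing such an $S\in\mathcal{M}$, it is a weak-$*$ limit of finite averages $\sum_i c_iU_iTU_i^{*}$ with $U_i\in\mathcal{U}(\mathcal{M}')$, and since the norm is weak-$*$ lower semicontinuous,
\[
\mathrm{dist}(T,\mathcal{M})\le\|T-S\|\le\sup_{U\in\mathcal{U}(\mathcal{M}')}\|T-UTU^{*}\|=\sup_{U\in\mathcal{U}(\mathcal{M}')}\|[T,U]\|.
\]

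It then remains to bound a single commutator $\|[T,U]\|$ by $4\,\beta(T,\mathcal{M})$. For a projection $P\in\mathcal{P}(\mathcal{M}')$ the commutator $[T,P]$ is purely off-diagonal, with blocks $P^{\perp}TP$ and $-PTP^{\perp}$; applying the definition of $\beta$ to the projections $P$ and $P^{\perp}$ (both in $\mathcal{M}'$) gives $\|[T,P]\|\le\beta(T,\mathcal{M})$. For a positive contraction $H\in\mathcal{M}'$, the layer-cake representation $H=\int_0^1 E_t\,dt$ by spectral projections $E_t\in\mathcal{M}'$ yields $\|[T,H]\|\le\int_0^1\|[T,E_t]\|\,dt\le\beta(T,\mathcal{M})$, and splitting a self-adjoint contraction into positive and negative parts gives $\|[T,H]\|\le2\,\beta(T,\mathcal{M})$. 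Writing a unitary $U\in\mathcal{U}(\mathcal{M}')$ as $U=H+iK$ with $H,K$ self-adjoint contractions in $\mathcal{M}'$ then gives $\|[T,U]\|\le\|[T,H]\|+\|[T,K]\|\le4\,\beta(T,\mathcal{M})$. Combining with the previous display yields $\mathrm{dist}(T,\mathcal{M})\le4\,\beta(T,\mathcal{M})$, as claimed.

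The conceptual core is Proposition~\ref{prop:commutant-vn-alg-injective}, which is already in hand; alternatively one may simply invoke Christensen's hyperreflexivity bound for injective von Neumann algebras in \cite{Chr82}. I expect the two points demanding care to be the reduction to the unitary case — where I would lean on the non-self-adjointness of $\mathcal{L}_n$ or cite the structure theory outright — and the clean passage from injectivity of $\mathcal{M}'$ to an approximating average lying in $\mathcal{M}$ rather than merely a conditional expectation onto $\mathcal{M}'$. This last point is precisely property $P$, and it is what makes the commutator bound produce the constant $4$; getting the averages to land in $\mathcal{M}$, rather than in its commutant, is the main thing to verify.
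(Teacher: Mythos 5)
Your proof is correct, and its skeleton is the same as the paper's: singularity means $\mathcal{M}=\mathrm{W}(V)$ is a von Neumann algebra, Proposition~\ref{prop:commutant-vn-alg-injective} gives injectivity of $\mathcal{M}'$, and injectivity of the commutant yields the distance constant $4$. The paper's proof consists of exactly those two citations (Proposition~\ref{prop:commutant-vn-alg-injective} plus \cite{Chr77}), so the value you add lies in the two steps you work out by hand. First, you explicitly prove that a singular tuple is unitary, via the defect projection $P=I-\sum_i V_iV_i^*$ and the non-self-adjointness of $\mathcal{L}_n$; the paper applies Proposition~\ref{prop:commutant-vn-alg-injective} --- which is stated only for \emph{unitary} tuples --- to a singular tuple without recording this reduction, so you are filling a real (if well known, cf.\ \cite{Ken12}) gap in the exposition, and your argument for it is sound. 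Second, in place of the citation to Christensen you give the Schwartz averaging proof of the constant $4$: the estimate $\|[T,E]\|\le\beta(T,\mathcal{M})$ for projections $E\in\mathcal{M}'$, the layer-cake bound for positive contractions, the splitting $U=H+iK$ giving $\|[T,U]\|\le 4\beta(T,\mathcal{M})$, and the property-P average landing in $(\mathcal{M}')'=\mathcal{M}$ are all correct and do recover $\mathrm{dist}(T,\mathcal{M})\le 4\,\beta(T,\mathcal{M})$. The one caveat is that the implication you lean on, ``$\mathcal{M}'$ injective $\Rightarrow$ $\mathcal{M}'$ has property P,'' is itself deep --- it is Connes' injective-implies-hyperfinite theorem combined with Schwartz's hyperfinite-implies-property-P --- so your self-contained route is not more elementary than the paper's; it trades Christensen's theorem for Connes'. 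You flag this yourself by offering Christensen as a fallback; note, though, that the paper's reference for the constant $4$ is \cite{Chr77} rather than \cite{Chr82}.
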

\begin{proof}
By definition, the weakly closed algebra generated by a singular isometric
tuple is a von Neumann algebra, and by Proposition \ref{prop:commutant-vn-alg-injective},
the commutant of this von Neumann algebra is injective. By \cite{Chr77},
this algebra is hyperreflexive with distance constant at most $4$. 
\end{proof}

We now have everything we need to prove the main result.
\begin{thm}
\label{thm:main-theorem} Every isometric $n$-tuple is hyperreflexive.
For $n=1$, the distance constant is at most $95$. For $n\geq2$,
the distance constant is at most $6$.\end{thm}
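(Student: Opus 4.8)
The plan is to assemble the main theorem from the pieces already in hand, using the Lebesgue--von Neumann--Wold decomposition of Theorem \ref{thm:decomp-isom-tuple} together with the inheritance and direct-sum results for hyperreflexivity. The case $n=1$ is already settled by Proposition \ref{prop:n=00003D1}, so I would dispatch it in a single sentence and focus entirely on $n\geq2$. For $n\geq2$, write the decomposition
\[
V=V_{u}\oplus V_{a}\oplus V_{s}\oplus V_{d},
\]
and observe, as in the proof of Proposition \ref{prop:n=00003D1}, the containment
\[
\mathrm{W}(V)\subseteq\mathrm{W}(V_{u})\oplus\mathrm{W}(V_{a})\oplus\mathrm{W}(V_{s})\oplus\mathrm{W}(V_{d}).
\]
The strategy is to bound the distance constant of each summand, combine them with Theorem \ref{thm:directsum-hyperreflexive} (Kli\'s--Ptak), and then pass from the containing direct sum back down to $\mathrm{W}(V)$ itself via Theorem \ref{thm:hereditary-hyperreflexive} (Kraus--Larson, Davidson).

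First I would handle the three summands for which we already have good bounds. The unilateral $n$-shift $V_{u}$ and the absolutely continuous tuple $V_{a}$ are both absolutely continuous, so Proposition \ref{prop:analytic-fsa-hyperreflexive} gives distance constant at most $3$ for each. The singular tuple $V_{s}$ has distance constant at most $4$ by Corollary \ref{cor:singular-fsa-hyperreflexive}. The genuinely new work is the dilation-type summand $V_{d}$, which has no analogue in the single-isometry case and is the main obstacle: I expect one must show that $\mathrm{W}(V_{d})$ is hyperreflexive with an explicit constant, presumably by exploiting the structure of $V_{d}$ as a minimal isometric dilation and relating $\beta(\cdot,\mathrm{W}(V_{d}))$ to the absolutely continuous and von Neumann algebra pieces that control it, in the spirit of the argument behind Proposition \ref{prop:commutant-vn-alg-injective}. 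The target is a distance constant small enough that the final combination lands at $6$.

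Once all four summands are bounded by a common constant $C$, Theorem \ref{thm:directsum-hyperreflexive} gives the direct sum $\mathrm{W}(V_{u})\oplus\mathrm{W}(V_{a})\oplus\mathrm{W}(V_{s})\oplus\mathrm{W}(V_{d})$ a distance constant at most $2+3C$. To descend to the weak-$*$ closed subspace $\mathrm{W}(V)$, I would apply Theorem \ref{thm:hereditary-hyperreflexive}, which requires that the containing algebra have property $\mathbb{A}_{1}(1)$; verifying this property for the direct-sum algebra associated to an isometric tuple is the technical linchpin that makes the descent legal, and I would establish it (or cite its establishment for these free-semigroup-type algebras) before invoking the inheritance theorem. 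The arithmetic must then be arranged carefully: since the inheritance step multiplies by $2$ and adds $1$, and the direct-sum step sends $C\mapsto2+3C$, obtaining the stated bound of $6$ means the per-summand constants and the order of the two combining steps have to be chosen to keep the accumulated constant tight, in contrast to the looser bookkeeping that produced $95$ in the $n=1$ case.
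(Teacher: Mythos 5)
Your handling of $n=1$ via Proposition \ref{prop:n=00003D1} matches the paper, but for $n\geq2$ your plan has two genuine gaps, one of substance and one of arithmetic. The substantive gap is the dilation-type summand: you correctly identify $\mathrm{W}(V_{d})$ as the heart of the matter, but you leave it as a placeholder (``I expect one must show\ldots''), and nothing in the results you cite yields hyperreflexivity of $\mathrm{W}(V_{d})$. This is precisely where the paper's new idea enters, and it is not a per-summand argument at all: the paper never decomposes $V$ and recombines. Instead it invokes the structure theorem of \cite{DKP01}, which produces a projection $P$ inside $\mathcal{S}=\mathrm{W}(V)$, with coinvariant range, such that $\mathcal{S}P=\mathcal{M}P$ for $\mathcal{M}=\mathrm{W}^{*}(V)$ and such that the compression $P^{\perp}\mathcal{S}\mid_{\mathrm{ran}(P^{\perp})}$ is absolutely continuous (Proposition 6.2 of \cite{Ken12}). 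One then uses the Pythagorean identity for the column decomposition,
\[
\mathrm{dist}(T,\mathcal{S})^{2}=\mathrm{dist}(TP,\mathcal{S}P)^{2}+\mathrm{dist}(TP^{\perp},\mathcal{S}P^{\perp})^{2},
\]
bounds the first term by $4\beta(T,\mathcal{S})$ using injectivity of the commutant (Proposition \ref{prop:commutant-vn-alg-injective} together with \cite{Chr77}, as in Corollary \ref{cor:singular-fsa-hyperreflexive}), and bounds the second term by $\|PTP^{\perp}\|+3\beta(P^{\perp}TP^{\perp},\mathcal{S}P^{\perp})\leq4\beta(T,\mathcal{S})$ using Proposition \ref{prop:analytic-fsa-hyperreflexive}. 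This gives the constant $4\sqrt{2}<6$. Without an argument of this kind, your proposal proves nothing about tuples with a dilation-type part, which is exactly the case left open by \cite{DKP01}.

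The arithmetic gap is equally fatal: even if every summand had constant $C\geq3$, your combination scheme cannot land at $6$. Theorem \ref{thm:directsum-hyperreflexive} sends $C\mapsto2+3C$, and the descent via Theorem \ref{thm:hereditary-hyperreflexive} sends that to $2(2+3C)+1=6C+5\geq23$; moreover the descent step requires property $\mathbb{A}_{1}(1)$ for the ambient direct-sum algebra, which you flag as the ``linchpin'' but do not establish. No careful ordering of the two steps fixes this, because the inflation is structural: you must form the ambient direct sum before you can descend from it. The paper evades the problem entirely by working with the structure projection $P$, which lives inside $\mathcal{S}$ itself, so the two bounds of $4\beta(T,\mathcal{S})$ combine in $\ell^{2}$ rather than through the lossy direct-sum and inheritance theorems. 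Indeed, this is why the constant $95$ occurs only for $n=1$, where no analogue of the \cite{DKP01} structure projection is available and the paper really does have to decompose, sum, and descend as you propose.
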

\begin{proof}
For $n=1$, the result follows from Proposition \ref{prop:n=00003D1}.
Therefore we let $V=(V_{1},\ldots,V_{n})$ be an isometric $n$-tuple
of dilation type acting on a Hilbert space $\mathcal{H}$, for $n\geq2$.
Let $\mathcal{M}$ denote the von Neumann algebra $\mathrm{W}^{*}(V)$
generated by $V$, and let $\mathcal{S}$ be the weakly closed algebra
$\mathrm{W}(V)$, i.e. the free semigroup algebra, generated by $V$
. By the structure theorem from \cite{DKP01}, there is a projection
$P$ in $\mathcal{S}$, with range coinvariant for $\mathcal{S}$,
such that $\mathcal{S}P=\mathcal{M}P$.

If $P=0$, then $V$ is analytic, and the result follows from Proposition
\ref{prop:analytic-fsa-hyperreflexive}. On the other hand, if $P=I$,
then $V$ is singular, and the result follows from Corollary \ref{cor:singular-fsa-hyperreflexive}.
Hence we can suppose that $P\ne0$ and $P\ne I$. Note that $\mathcal{S}P$
and $\mathcal{S}P^{\perp}$ are both weakly closed algebras, and that
we can write $\mathcal{S}=\mathcal{S}P+\mathcal{S}P^{\perp}$. Note
also that if $S$ and $R$ belong to $\mathcal{S}$, then $SP$ and
$RP^{\perp}$ also belong to $\mathcal{S}$. Hence for $T$ in $\mathrm{B}(\mathcal{H})$,
\begin{align*}
\mathrm{dist}(T,\mathcal{S})^{2} & =\inf\{\|T-S\|^{2}\mid S\in\mathcal{S}\}\\
 & =\inf\{\|TP-SP+TP^{\perp}-SP^{\perp}\|^{2}\mid S\in\mathcal{S}\}\\
 & =\inf\{\|TP-SP+TP^{\perp}-RP^{\perp}\|^{2}\mid S,R\in\mathcal{S}\}\\
 & =\inf\{\|PT^{*}-PS^{*}+P^{\perp}T^{*}-P^{\perp}R^{*}\|^{2}\mid S,R\in\mathcal{S}\}\\
 & =\inf\{\|TP-SP\|^{2}+\|TP^{\perp}-RP^{\perp}\|^{2}\mid S,R\in\mathcal{S}\}\\
 & =\inf\{\|TP-SP\|^{2}\mid S\in\mathcal{S}\}+\inf\{\|TP^{\perp}-RP^{\perp}\|^{2}\mid R\in\mathcal{S}\}\\
 & =\mathrm{dist}(TP,\mathcal{S}P)^{2}+\mathrm{dist}(TP^{\perp},\mathcal{S}P^{\perp})^{2}.
\end{align*}
In order to show that $\mathcal{S}$ is hyperreflexive, we will bound
 $\mathrm{dist}(TP,\mathcal{S}P)$ and $\mathrm{dist}(TP^{\perp},\mathcal{S}P^{\perp})$
separately.

First, we consider the value of $\mathrm{dist}(TP,\mathcal{S}P)$.
By the argument from the proof of Corollary \ref{cor:singular-fsa-hyperreflexive},
the von Neumann algebra $\mathcal{M}$ is hyperreflexive with distance
constant at most $4$. Therefore, 
\begin{equation}
\mathrm{dist}(TP,\mathcal{S}P)=\mathrm{dist}(TP,\mathcal{M}P)=\mathrm{dist}(TP,\mathcal{M})\leq4\beta(TP,\mathcal{M}).\label{eq:main-theorem-0}
\end{equation}
For $x$ and $y$ in $\mathcal{H}$, let $xy^{*}$ denote the linear
functional on $\mathrm{B}(\mathcal{H})$ defined by
\[
(xy^{*})(T)=(Tx,y),\quad T\in\mathrm{B}(\mathcal{H}).
\]
Then we have the inequality 
\begin{align}
\beta(TP,\mathcal{M}) & =\sup\{|\langle TPx,y\rangle|\mid xy^{*}\in\mathcal{M}_{\perp},\ \|x\|\|y\|\leq1\}\label{eq:main-theorem-1}\\
 & =\sup\{|\langle TP(Px),y\rangle|\mid(Px)y^{*}\in\mathcal{M}_{\perp},\ \|x\|\|y\|\leq1\}\nonumber \\
 & \leq\sup\{|\langle TPx,y\rangle|\mid xy^{*}\in(\mathcal{M}P)_{\perp},\ \|x\|\|y\|\leq1\}\nonumber \\
 & =\beta(TP,\mathcal{M}P),\nonumber 
\end{align}
and similarly, 
\begin{align}
\beta(TP,\mathcal{S}P) & =\sup\{|\langle TPx,y\rangle|\mid xy^{*}\in(\mathcal{S}P)_{\perp},\ \|x\|\|y\|\leq1\}\label{eq:main-theorem-2}\\
 & =\sup\{|\langle T(Px),y\rangle|\mid(Px)y^{*}\in\mathcal{S}_{\perp},\ \|x\|\|y\|\leq1\}\nonumber \\
 & \leq\sup\{|\langle Tx,y\rangle|\mid xy^{*}\in\mathcal{S}_{\perp},\ \|x\|\|y\|\leq1\}\nonumber \\
 & =\beta(T,\mathcal{S}).\nonumber 
\end{align}
Putting (\ref{eq:main-theorem-0}), (\ref{eq:main-theorem-1}) and
(\ref{eq:main-theorem-2}) together gives
\begin{equation}
\mathrm{dist}(TP,\mathcal{S}P)\leq4\beta(TP,\mathcal{M})\leq4\beta(TP,\mathcal{M}P)\leq4\beta(T,\mathcal{S}).\label{eq:main-theorem-a}
\end{equation}

Now we consider the value of $\mathrm{dist}(TP^{\perp},\mathcal{S}P^{\perp})$.
To obtain a bound, we could appeal to Theorem \ref{thm:directsum-hyperreflexive},
but by calculating directly, we can obtain a better distance constant.
By Proposition 6.2 of \cite{Ken12}, the compression $P^{\perp}\mathcal{S}\mid_{\mathrm{ran}(P^{\perp})}$
is absolutely continuous. Hence by Proposition \ref{prop:analytic-fsa-hyperreflexive},
the algebra $\mathcal{S}P^{\perp}$ is also hyperreflexive with distance
constant at most $3$. Since the range of $P^{\perp}$ is invariant
for $\mathcal{S}$, $SP^{\perp}=P^{\perp}SP^{\perp}$ for every $S$
in $\mathcal{S}$. Hence
\begin{align}
\mathrm{dist}(TP^{\perp},\mathcal{S}P^{\perp}) & =\inf\{\|TP^{\perp}-SP^{\perp}\|\mid S\in\mathcal{S}\}\label{eq:main-theorem-3}\\
 & \leq\inf\{\|PTP^{\perp}\|+\|P^{\perp}TP^{\perp}-SP^{\perp}\|\mid S\in\mathcal{S}\}\nonumber \\
 & =\|PTP^{\perp}\|+\inf\{\|P^{\perp}TP^{\perp}-SP^{\perp}\|\mid S\in\mathcal{S}\}\nonumber \\
 & =\|PTP^{\perp}\|+\mathrm{dist}(P^{\perp}TP^{\perp},\mathcal{S}P^{\perp}).\nonumber 
\end{align}
But since $P^{\perp}\in\mathrm{Lat}(\mathcal{S})$, $\|PTP^{\perp}\|\leq\beta(T,\mathcal{S})$,
and since $\mathcal{S}P^{\perp}$is hyperreflexive with distance constant
at most $3$, 
\begin{equation}
\mathrm{dist}(P^{\perp}TP^{\perp},\mathcal{S}P^{\perp})\leq3\beta(P^{\perp}TP^{\perp},\mathcal{S}P^{\perp}).\label{eq:main-theorem-4}
\end{equation}
By an argument similar to (\ref{eq:main-theorem-1}) and (\ref{eq:main-theorem-2}),
$\beta(P^{\perp}TP^{\perp},\mathcal{S}P^{\perp})\leq\beta(T,\mathcal{S})$.
Hence putting (\ref{eq:main-theorem-3}) and (\ref{eq:main-theorem-4})
together gives 
\begin{equation}
\mathrm{dist}(TP^{\perp},\mathcal{S}P^{\perp})\leq\beta(T,\mathcal{S})+3\beta(P^{\perp}TP^{\perp},\mathcal{S}P^{\perp})\leq4\beta(T,\mathcal{S}).\label{eq:main-theorem-b}
\end{equation}
Combining (\ref{eq:main-theorem-a}) and (\ref{eq:main-theorem-b}),
we see that $\mathcal{S}$ is hyperreflexive with distance constant
at most $4\sqrt{2}<6$.\end{proof}
\begin{rem*}
We note that by replacing (\ref{eq:main-theorem-3}) in the proof
of Theorem \ref{thm:main-theorem} with the inequality 
\[
\mathrm{dist}(TP^{\perp},\mathcal{S}P^{\perp})\leq(\mathrm{dist}(PTP^{\perp},P\mathcal{S}P^{\perp})^{2}+\mathrm{dist}(P^{\perp}TP^{\perp},P^{\perp}\mathcal{S}P^{\perp})^{2})^{1/2},
\]
we could obtain the slightly better distance constant $\sqrt{26}\approx5.1$.
\end{rem*}

\begin{acknowledgement*}
The authors are grateful to Ken Davidson for many helpful comments
and suggestions which, in particular, helped us improve the distance
constants obtained in Theorem \ref{thm:main-theorem}. The authors
are also grateful to Dilian Yang for correcting an early draft of
this paper.
\end{acknowledgement*}


\begin{thebibliography}{References}
\bibitem[Arv72]{Arv72}W. Arveson, \emph{Subalgebras of C{*}-Algebras
II}, Acta Mathematica 123 (1972), 271--308.

\bibitem[Arv75]{Arv75}W. Arveson, \emph{Interpolation problems in
nest algebras}, Journal of Functional Analysis 20 (1975), No. 3, 208--233.

\bibitem[Ber98]{Ber98}H. Bercovici, \emph{Hyper-reflexivity and the
factorization of linear functionals}, Journal of Functional Analysis
158 (1998), No. 1, 242--252.

\bibitem[CE78]{CE78}M.D. Choi, E.G. Effros, \emph{Nuclear C{*}-algebras
and the approximation property}, American Journal of Mathematics 100
(1978), No. 1, 61--79.

\bibitem[Chr77]{Chr77}E. Christensen, \emph{Perturbations of operator
algebras II}, Indiana University Mathematics Journal 26 (1977), No.
5, 891\textendash{}904.

\bibitem[Chr82]{Chr82}E. Christensen, \emph{Extensions of derivations
II}, Mathematica Scandinavia 50 (1982), No. 1, 111--122.

\bibitem[Cu77]{Cu77}J. Cuntz, \emph{Simple C{*}-algebras generated
by isometries}, Communications in Mathematical Physics 57 (1977),
No. 1, 173--185.

\bibitem[Con00]{Con00}J.B. Conway, \emph{A Course in Operator Theory,}
Graduate Studies in Mathematics 21 (2000), American Mathematical Society,
Providence.

\bibitem[Dav87]{Dav87}K.R. Davidson, \emph{The distance to the analytic
Toeplitz operators}, Illinois Journal of Mathematics 31 (1987), No.
2, 265--273.

\bibitem[Dav01]{Dav01}K.R. Davidson, \emph{Free semigroup algebras:
a survey}, Operator Theory: Advances and Applications 129 (2000),
Birkhauser, Bordeaux.

\bibitem[Dav06]{Dav06}K.R. Davidson, \emph{$\mathrm{B}(H)$ is a
free semigroup algebra}, Proceedings of the American Mathematical
Society 134 (2006), No. 2, 1753--1757.

\bibitem[DKP01]{DKP01}K.R. Davidson, E. Katsoulis, D.R. Pitts, \emph{The
structure of free semigroup algebras}, Journal für die reine und angewandte
Mathematik 533 (2001), 99--125.

\bibitem[DL06]{DL06}K.R. Davidson, R.H. Levene, \emph{1-Hyperreflexivity
and complete hyperreflexivity}, Journal of Functional Analysis 235
(2006), No. 2, 666--701.

\bibitem[DP98]{DP98}K.R. Davidson, D.R. Pitts, \emph{The algebraic
structure of noncommutative analytic Toeplitz algebras}, Mathematische
Annalen 311 (1998), 275--303.

\bibitem[DP99]{DP99} K.R. Davidson, D.R. Pitts, \emph{Invariant subspaces
and hyper-reflexivity for free semigroup algebras}, Proceedings of
the London Mathematical Society 78 (1999), No. 2, 401--430.

\bibitem[H70]{H70}P.R. Halmos, \emph{Ten problems in Hilbert space},
Bulletin of the American Mathematical Society 76 (1970), No. 5, 887--933.

\bibitem[H71]{H71}P.R. Halmos, \emph{Reflexive lattices of subspaces},
Journal of the London Mathematical Society 4 (1971), No. 2, 257--263.

\bibitem[JP06]{JP06}F. Jaëck, S. C. Power, \emph{Hyper-reflexivity
of free semigroupoid algebras,} Proceedings of the American Mathematical
Society 134 (2006), No. 7, 2027--2035.

\bibitem[Ken11]{Ken11} M. Kennedy, \emph{Wandering vectors and the
reflexivity of free semigroup algebras}, Journal für die reine und
angewandte Mathematik 653 (2011), 47--73.

\bibitem[Ken12]{Ken12}M. Kennedy, \emph{The structure of an isometric
tuple}, to appear in Proceedings of the London Mathematical Society
(2012).

\bibitem[KL85]{KL85}J. Kraus, D. Larson, \emph{Some applications
of a technique for constructing reflexive operator algebras}, Journal
of Operator Theory 13 (1985), no. 2, 227--236.

\bibitem[KL86]{KL86}J. Kraus, D. Larson, \emph{Reflexivity and distance
formulae}, Proceedings of the London Mathematical Society 53 (1986),
No. 1, 248--252.

\bibitem[KP05]{KP05}K. Kli\'{s}, M. Ptak, \emph{Quasinormal operators
are hyperreflexive}, Topological algebras, their applications, and
related topics 67 (2005), Banach Center Publications, Polish Academy
of Sciences, Warszawa.

\bibitem[KP06]{KP06}K. Kli\'{s}, M. Ptak, \emph{$k$-hyperreflexive
subspaces}, Houston Journal of Mathematics 32 (2006), No. 1, 299--313.

\bibitem[LS75]{LS75}A.I. Loginov, V.S. Shulman, \emph{Hereditary
and intermediate reflexivity of $\mathrm{W}^{*}$-algebras} (Russian),
Izvestiya Rossiiskoi Akademii Nauk. Seriya Matematicheskaya 39 (1975),
1260--1273. English translation, Mathematics of the USSR-Izvestiya
9 (1975), 1189--1201.

\bibitem[MP05]{MP05}V. Müller, M. Ptak, \emph{Hyperreflexivity of
finite-dimensional subspaces in operator algebras}, Journal of Functional
Analysis 218 (2005), No. 2, 395--408.

\bibitem[NFBK10]{NFBK10}B. Sz.-Nagy, C. Foias, H. Bercovici, L. Kérchy,
\emph{Harmonic analysis of operators on Hilbert space} (2010), Universitext,
Springer, New York.

\bibitem[OT80]{OT80}R. F. Olin, J. E. Thomson, \emph{Algebras of
subnormal operators}, Journal of Functional Analysis Volume 37, Issue
3, (1980), No. 3, 271--301.

\bibitem[Pop91]{Pop91}G. Popescu, \emph{von Neumann inequality for
$(B(H)^{n})_{1}$}, Mathematica Scandinavica 68 (1991), No. 2, 292--304.

\bibitem[Pop96]{Pop96}G. Popescu, \emph{Non-commutative disc algebras
and their representations,} Proceedings of the American Mathematical
Society 124 (1996), No. 7, 2137--2148.

\bibitem[PP12]{PP12}K. Piwowarczyka, M. Ptak, \emph{On the hyperreflexivity
of power partial isometries}, Linear Algebra and its Applications
437 (2012), No. 2, 623--629.

\bibitem[Rea05]{Rea05}C. J. Read, \emph{A large weak operator closure
for the algebra generated by two isometries}, Journal of Operator
Theory 54 (2005), No. 2, 30--316.

\bibitem[Ros82]{Ros82}S. Rosenoer, \emph{Distance estimates for von
Neumann algebras}, Proceedings of the American Mathematical Society
86 (1982), No. 2, 248\textendash{}252.

\bibitem[Sar67]{Sar67}D. Sarason, \emph{Generalized interpolation
in $H^{\infty}$}, Transactions of the American Mathematical Society
127 (1967), No. 1, 179--203.\end{thebibliography}
\end{document}